\def\5n{\negthinspace \negthinspace \negthinspace \negthinspace \negthinspace }
\def\4n{\negthinspace \negthinspace \negthinspace \negthinspace }
\def\3n{\negthinspace \negthinspace \negthinspace }
\def\2n{\negthinspace \negthinspace }
\def\1n{\negthinspace }
 \def\sC{\mathscr{C}}    
\def\dbE{\mathbb{E}}     
\def\dbF{\mathbb{F}}   \def\cF{{\cal F}}  
\def\dbH{\mathbb{H}}
\def\dbP{\mathbb{P}}     
\def\dbR{\mathbb{R}}   \def\cR{{\cal R}}  
\def\dbS{\mathbb{S}}   \def\cS{{\cal S}}  
   \def\cU{{\cal U}}
\def\Om{\Omega}           \def\Th{\Theta}
\def\ss{\smallskip}                
\def\ms{\medskip}         \def\da{\mathop{\downarrow}}       
         \def\Ra{\mathop{\Rightarrow}}      \def\lan{\langle}
\def\ds{\displaystyle}           \def\ran{\rangle}
\def\ra{\rightarrow}      
\def\no{\noindent}        \def\q{\quad}                      \def\llan{\left\langle}
\def\ns{\noalign{\ss}}    \def\qq{\qquad}                    \def\rran{\right\rangle}
    \def\hb{\hbox}                     
         \def\rf{\eqref}                    
  \def\deq{\triangleq}               
\def\tb{\textcolor{blue}} \def\ae{\hbox{\rm a.e.}}           \def\({\Big (}
\def\les{\leqslant}                  \def\){\Big )}
\def\ges{\geqslant}       \def\esssup{\mathop{\rm esssup}}   \def\[{\Big[}
           \def\]{\Big]}
          \def\tr{\hbox{\rm tr$\,$}}         \def\cd{\cdot}
\def\wt{\widetilde}              \def\cds{\cdots}
        \def\ts{\times}
\def\a{\alpha}              \def\O{\Omega}   
         \def\D{\Delta}   \def\d{\delta}        
\def\z{\zeta}         \def\Th{\Theta}      \def\si{\sigma}
\def\e{\varepsilon}     \def\l{\lambda}        
         \def\f{\varphi}  \def\i{\infty}   
\def\bde{\begin{definition}\label}    \def\ede{\end{definition}}
\def\be{\begin{equation}}
\def\bel{\begin{equation}\label}      \def\ee{\end{equation}}
\def\bt{\begin{theorem}\label}        \def\et{\end{theorem}}
\def\bc{\begin{corollary}\label}      \def\ec{\end{corollary}}
\def\bl{\begin{lemma}\label}          \def\el{\end{lemma}}
\def\bp{\begin{proposition}\label}    \def\ep{\end{proposition}}
\def\bas{\begin{assumption}\label}    \def\eas{\end{assumption}}
\def\br{\begin{remark}\label}         \def\er{\end{remark}}
\def\bex{\begin{example}\label}       \def\ex{\end{example}}
\def\ba{\begin{array}}                \def\ea{\end{array}}
\def\ben{\begin{enumerate}}           \def\een{\end{enumerate}}
\newtheorem{theorem}{Theorem}[section]
\newtheorem{definition}[theorem]{Definition}
\newtheorem{proposition}[theorem]{Proposition}
\newtheorem{corollary}[theorem]{Corollary}
\newtheorem{lemma}[theorem]{Lemma}
\newtheorem{remark}[theorem]{Remark}
\newtheorem{example}[theorem]{Example}
\begin{document}

\title{\bf Weak Closed-Loop Solvability of Stochastic Linear Quadratic Optimal Control Problems of Markovian Regime Switching System\thanks{This work is supported partially by Southern University of Science and Technology Start up fund Y01286120 and National Natural Science Foundation of China grants 61873325 and 11831010. Also, this work is partially supported by Research Grants Council of Hong Kong under grants 15255416 and 15213218.}}

\author{Jiaqiang Wen\thanks{Department of Mathematics, Southern University of Science and Technology,
Shenzhen, Guangdong, 518055, China (wenjq@sustech.edu.cn).}~,~~~
Xun Li\thanks{Corresponding author. Department of Applied Mathematics, The Hong Kong Polytechnic University,
Hong Kong (malixun@polyu.edu.hk).}~,~~~
Jie Xiong\thanks{Department of Mathematics and SUSTech International center for Mathematics, Southern University of Science and Technology, Shenzhen, Guangdong, 518055, China (xiongj@sustech.edu.cn).}}

\maketitle


\no\bf Abstract. \rm
In this paper, we investigate open-loop and weak closed-loop solvabilities of stochastic linear quadratic (LQ, for short) optimal control problem of Markovian regime switching system. Interestingly, these two solvabilities are equivalent on $[0, T)$. We first provide an alternative characterization of the open-loop solvability of LQ problem using a perturbation approach. Then, we study the weak closed-loop solvability of LQ problem of Markovian regime switching system, and establish the equivalent relationship between open-loop and weak closed-loop solvabilities. Finally, we present an example to shed on light on finding weak closed-loop optimal strategies within the framework of Markovian regime switching system.

\ms

\no\bf Key words: \rm
stochastic linear quadratic optimal control, Markovian regime switching, Riccati equation, open-loop solvability, weak closed-loop solvability.

\ms

\no\bf AMS subject classifications. \rm
93E20, 49N10, 49N35.

\section{Introduction}

Linear quadratic (LQ, for short) optimal control can be traced back to the works of Kalman \cite{Kalman 1960} for the deterministic cases, and Wonham \cite{Wonham 1968} for the stochastic cases (also see Davis \cite{Davis 1977}, Bensoussan \cite{Bensoussan 1982}, Yong and Zhou \cite{Yong-Zhou 1999} and the references therein). In the classical setting, under some mild conditions on the weighting coefficients in the cost functional such as positive definiteness of the weighting control matrix, the stochastic LQ optimal control problems can be solved elegantly via Riccati equation approach (see Yong and Zhou \cite[Chapter 6]{Yong-Zhou 1999}). Chen, Li and Zhou \cite{Chen-Li-Zhou 1998} studied stochastic LQ optimal control problems with indefinite weighting control matrix as well as financial applications such as continuous time mean-variance portfolio selection problems (see Zhou and Li \cite{Zhou-Li 2000}, Li, Zhou and Lim \cite{Li-Zhou-Lim 2002}). Since then, there has been an increasing interest in the so-called {\it indefinite} stochastic LQ optimal control (see Ait Rami, Moore and Zhou \cite{Ait Rami-Moore-Zhou 2001}, Li, Zhou and Ait Rami \cite{Li-Zhou-Ait Rami 2003}).

\ms

A topic of state systems involving random jumps, such as Poisson jumps or regime switching jumps, is of interest and of importance in various fields such as engineering, management, finance, economics, and so on. For example, Wu and Wang \cite{Wu-Wang 2003} considered the stochastic LQ optimal control problems with Poisson jumps and obtained the existence and uniqueness of the deterministic Riccati equation. Using the technique of completing squares, Hu and Oksendal \cite{Hu-Oksendal 2008} discussed the stochastic LQ optimal control problem with Poisson jumps and partial information. Yu \cite{Yu 2017} investigated a kind of infinite horizon backward stochastic LQ optimal control problems. Li, Wu and Yu \cite{Li-Wu-Yu 2018} solved the indefinite  stochastic LQ optimal control problem with Poisson jumps.
Meanwhile, there has been dramatically increasing interest in studying this family of stochastic control problems as well as their financial applications, see, for examples, Zhou and Yin \cite{Zhou-Yin 2003,Yin-Zhou 2004}, Zhang, Siu and Meng  \cite{Zhang-Siu-Meng 2010}, Zhang, Elliott and Siu \cite{Zhang-Elliott-Siu 2012}, Zhang, Sun and Xiong \cite{Zhang-Sun-Xiong 2018}, Mei and Yong \cite{Mei-Yong 2018}, Hu, Liang and Tang \cite{Hu-Liang-Tang 2018} and Sun and Wang \cite{Sun-Wang 2019}. Moreover, Ji and Chizeck \cite{Ji-Chizeck 1990,Ji-Chizeck 1992} formulated a class of continuous-time LQ optimal controls with Markovian jumps. Zhang and Yin \cite{Zhang-Yin 1999} developed hybrid controls of a class of LQ systems modulated by a finite-state Markov chain. Li, Zhou and Ait Rami \cite{Li-Zhou-Ait Rami 2003} initiated indefinite stochastic LQ optimal controls with regime switching jumps. Liu, Yin and Zhou \cite{Liu-Yin-Zhou 2005} considered near-optimal controls of regime switching LQ problems with indefinite control weight costs. Some other recent development concerning regime switching jumps see Donnelly and Heunis \cite{DH12}, and Li and Zheng \cite{LZ15}.

\ms

Recently, Sun and Yong \cite{Sun-Yong 2014} investigated the two-person zero-sum stochastic LQ differential games. It was shown in \cite{Sun-Yong 2014} that the open-loop solvability is equivalent to the existence of an adapted solution to a forward-backward stochastic differential equation (FBSDE, for short) with constraint and the closed loop solvability is equivalent to the existence of a regular solution to the Riccati equations. As a continuation work of \cite{Sun-Yong 2014}, Sun, Li and Yong \cite{Sun-Li-Yong 2016} fundamentally studied the open-loop and closed-loop solvabilities for stochastic LQ optimal control problems. Moreover, the equivalence between the strongly regular solvability of the Riccati equation and the uniform convexity of the cost functional is established. Wang, Sun and Yong \cite{Wang-Sun-Yong 2018} introduced the notion of weak closed-loop optimal strategy for LQ problems, and obtained its existence which is equivalent to the open-loop solvability of the LQ problem. Zhang, Li and Xiong \cite{Zhang-Li-Xiong 2018} studied the open-loop and closed-loop solvabilities for stochastic LQ optimal control problems with Markovian regime switching jumps, and established the equivalent relationship between the strongly regular solvability of the Riccati equation and the uniform convexity of the cost functional in the circumstance of Markovian regime switching system. In this paper, we further study the \textit{weak} closed-loop solvability of stochastic LQ optimal control problems with Markovian regime switching system. In order to present our work more clearly, we describe the problem in detail.

\ms

Let $(\O,\cF,\dbF,\dbP)$ be a complete filtered probability space on
which a standard one-dimensional Brownian motion $W=\{W(t); 0\les t
< \i \}$ and a continuous time, finite-state, Markov chain $\a=\{\a(t); 0\les t< \i \}$ are defined, where $\dbF=\{\cF_t\}_{t\ges0}$ is the natural filtration of $W$ and $\a$ augmented by all the $\dbP$-null sets in $\cF$,
and $\dbF^\a=\{\cF_t^\a\}_{t\ges0}$ is the filtration generated by $\a$, with the related expectation $\dbE^\a$.
We identify the state space of the chain $\a$ with a finite set $\cS\deq\{1, 2 \dots, D\}$, where $D\in \mathbb{N}$ and suppose that the chain is homogeneous and irreducible. Let $0\les t<T$ and consider the following controlled Markovian
regime switching linear stochastic differential equation (SDE, for short) over a finite time horizon $[t,T]$:
\bel{state}\left\{\ba{ll}
\ds dX(s)=\Big[A(s,\a(s))X(s)+B(s,\a(s))u(s)+b(s)\Big]ds\\
\ns\ns\ds\qq\qq +\Big[C(s,\a(s))X(s)+D(s,\a(s))u(s)+\si(s)\Big]dW(s),\q~s\in[t,T],\\
\ns\ns\ds X(t)=x,\q~\a(t)=i,
\ea\right.\ee
where $A,C:[0,T]\ts\cS\ra\dbR^{n\ts n}$ and $B,D:[0,T]\ts\cS\ra\dbR^{n\ts m}$ are given deterministic functions, called the coefficients of the {\it state equation} \rf{state};
$b,\si:[0,T]\ts\Om\ra\dbR^n$ are $\dbF$-progressively measurable processes,
called the {\it nonhomogeneous terms}; and $(t,x,i)\in[0,T)\ts\dbR^n\ts\cS$ is called the {\it initial pair}. In the above, the process $u(\cd)$, which belongs to the following space:
$$\cU[t,T]\deq\Big\{u:[t,T]\ts\Om\ra\dbR^m\ \Big|\ u\hb{ is } \dbF\hb{-progressively measurable and }
 \dbE\int_t^T|u(s)|^2ds<\i\Big\},$$
is called the {\it control process}, and the solution $X(\cd)$ of \rf{state} is called the {\it state process} corresponding to $(t,x,i)$ and $u(\cd)$. To measure the performance of the control $u(\cd)$, we introduce the following quadratic {\it cost functional}:
\bel{cost}\ba{ll}
\ds J(t,x,i;u(\cd))\deq\dbE\Bigg\{\Big\lan G(\a(T))X(T),X(T)\Big\ran+2\Big\lan g(\a(T)),X(T)\Big\ran\\
\ns\ns\ds\qq\qq\qq\qq \  \,+\int_t^T\Bigg[\llan\begin{pmatrix}Q(s,\a(s))&S(s,\a(s))^\top\\S(s,\a(s))&R(s,\a(s))\end{pmatrix}
                                \begin{pmatrix}X(s)\\ u(s)\end{pmatrix},
                                \begin{pmatrix}X(s)\\u(s)\end{pmatrix}\rran \\
\ns\ns\ds\qq\qq\qq\qq\qq\qq+2\llan\begin{pmatrix}q(s,\a(s))\\ \rho(s,\a(s))\end{pmatrix},\begin{pmatrix}X(s)\\ u(s)\end{pmatrix}\rran \Bigg]ds\Bigg\},
\ea\ee
where $G(i)\in\dbR^{n\ts n}$ is a symmetric constant matrix, and $g(i)$ is an $\cF_T$-measurable random variable
taking values in $\dbR^n$, with $i\in\cS$; $Q:[0,T]\ts\cS\ra\dbR^{n\ts n}$, $S:[0,T]\ts\cS\ra\dbR^{m\ts n}$ and $R:[0,T]\ts\cS\ra\dbR^{m\ts m}$ are deterministic functions with both $Q$ and $R$ being symmetric; $q:[0,T]\ts\cS\ra\dbR^{n}$ and $\rho:[0,T]\ts\cS\ra\dbR^{m}$ are deterministic functions. In the above, $M^\top$ stands for the transpose of a matrix $M$. The problem that we are going to study is the following:

\ms

\bf Problem (M-SLQ). \rm For any given initial pair $(t,x,i)\in[0,T)\ts\dbR^n\ts\cS$, find a control $u^{*}(\cd)\in\mathcal{U}[t,T]$, such that
\bel{optim}J(t,x,i;u^{*}(\cd))=\inf_{u(\cd)\in\cU[t,T]}J(t,x,i;u(\cd)),\q~\forall u(\cd)\in\mathcal{U}[t,T].\ee
The above is called a {\it stochastic linear quadratic optimal control problem} of the Markovian regime switching system. Any $u^{*}(\cd)\in\mathcal{U}[t,T]$ satisfying \rf{optim} is called an {\it open-loop optimal control} of Problem (M-SLQ) for the initial pair $(t,x,i)$; the corresponding state process $X(\cd)=X(\cd\ ;t,x,i,u^*(\cd))$ is called an {\it optimal state process}; and the function $V(\cd,\cd,\cd)$ defined by
\bel{value} V(t,x,i)\deq\inf_{u(\cd)\in\cU[t,T]}J(t,x,i;u(\cd)),\q~(t,x,i)\in[0,T]\ts\dbR^n\ts\cS,\ee
is called the {\it value function} of Problem (M-SLQ).
Note that in the special case when $b(\cd,\cd),\si(\cd,\cd),g(\cd),q(\cd,\cd),\rho(\cd,\cd)=0$, the state equation \rf{state} and the cost functional \rf{cost}, respectively, become
\bel{state-0}\left\{\ba{ll}
\ds dX(s)=\Big[A(s,\a(s))X(s)+B(s,\a(s))u(s)\Big]ds\\
\ns\ns\ds\qq\qq +\Big[C(s,\a(s))X(s)+D(s,\a(s))u(s)\Big]dW(s),\q~s\in[t,T],\\
\ns\ns\ds X(t)=x,\q~\a(t)=i,
\ea\right.\ee
and
\bel{cost-0}\ba{ll}
\ds J^0(t,x,i;u(\cd))=\dbE\Bigg\{\Big\lan G(\a(T))X(T),X(T))\Big\ran\\
\ns\ns\ds\qq\qq\qq\qq\q+\int_t^T\llan\begin{pmatrix}Q(s,\a(s))&S(s,\a(s))^\top\\S(s,\a(s))&R(s,\a(s))\end{pmatrix}
                                \begin{pmatrix}X(s)\\ u(s)\end{pmatrix},
                                \begin{pmatrix}X(s)\\u(s)\end{pmatrix}\rran ds\Bigg\}.
\ea\ee
We refer to the problem of minimizing \rf{cost-0} subject to \rf{state-0} as the homogeneous LQ problem associated with Problem (M-SLQ), denoted by \bf Problem (M-SLQ)$^0$\rm. The corresponding value function is denoted by $V^0(t,x,i)$.
Moreover, when all the coefficients of \rf{state} and \rf{cost} are independent of the regime switching term $\a(\cd)$, the corresponding problem \rf{optim} is called \bf Problem (SLQ)\rm.  

\ms

Following the works of \cite{Sun-Yong 2014,Sun-Li-Yong 2016}, Zhang, Li and Xiong \cite{Zhang-Li-Xiong 2018} investigated the open-loop and closed-loop solvabilities for stochastic LQ problems of Markovian regime switching system. It was shown that the open-loop solvability of Problem (M-SLQ) is equivalent to the solvability of a forward-backward stochastic differential equation with constraint. They also showed that the closed-loop solvability of Problem (M-SLQ) is equivalent to the existence of a {\it regular} solution of the following general Riccati equation (GRE, for short):
\bel{Riccati-0.1}\left\{\ba{ll}
\ds \dot P(s,i)+P(s,i)A(s,i)+A(s,i)^\top P(s,i)+C(s,i)^\top P(s,i)C(s,i)+Q(s,i)\\
\ns\ds\qq\q -\hat{S}(s,i)^\top\hat{R}(s,i)^{-1}\hat{S}(s,i)+\sum_{k=1}^D\l_{ik}(s)P(s,k)=0,\q \ae~s\in[0,T],\ i\in\cS,\\
\ns\ds P(T,i)=G(i),
\ea\right.\ee
where
\begin{equation*}
  \begin{aligned}
    \hat S(s,i)&= B(s,i)^\top P(s,i)+ D(s,i)^\top P(s,i)C(s,i)+S(s,i),\\
    \hat R(s,i)&= R(s,i)+D(s,i)^\top P(s,i)D(s,i).
  \end{aligned}
\end{equation*}
It can be found (see Zhang, Li and Xiong \cite{Zhang-Li-Xiong 2018}) that, for the stochastic LQ optimal control problem of Markovian regime switching system,  the existence of a closed-loop optimal strategy implies the existence of an open-loop optimal control, but not vice versa. Thus, there are some LQ problems that are open-loop solvable, but not closed-loop solvable. Such problems cannot be expected to get a regular solution (which does not exist) to the associated GRE \rf{Riccati-0.1}. Therefore, the state feedback representation of the open-loop optimal control might be impossible. To be more convincing, let us look at the following simple example.
\begin{example}\label{11.2.21.47} \rm
Consider the following one-dimensional state equation
$$\left\{\ba{ll}
\ds dX(s)=\big[-\a(s)X(s)+u(s)\big]ds+\sqrt{2\a(s)}X(s)dW(s),\q~s\in[t,1],\\
\ns\ds  X(t)=x,\q~\a(t)=i,
\ea\right.$$
and the nonnegative cost functional
$$J(t,x,i;u(\cd))=\dbE|X(1)|^2.$$
In this example, the GRE reads
(noting that $Q(\cd,i)=0,R(\cd,i)=0,D(\cd,i)=0$ for every $i\in\cS$, and $0^{-1}=0$):
\bel{11.2.22.06}\left\{\ba{ll}
\ds \dot P(s,i)+\sum_{k=1}^D\l_{ik}(s)P(s,k)=0,\q \ae~s\in[t,1],\\
\ns\ds P(T,i)=1, \quad i \in \cS.
\ea\right.\ee
It is not hard to check that GRE \rf{11.2.22.06} has no regular solution (see Section 3 for the definition of regular solution), thus the corresponding LQ problem is not closed-loop solvable. A usual Riccati equation approach specifies the corresponding state feedback control as follows (noting that $Q(\cd,i)=0,R(\cd,i)=0,D(\cd,i)=0$ for every $i\in\cS$, and $0^{-1}=0$):
$$\ba{ll}
\ds u^*(s) \deq -\Big[R(s,i)\!+\!D(s,i)^\top\!P(s,i)D(s,i)\Big]^{-1}
\!\Big[B(s,i)P(s,i)\!+\!D(s,i)^\top\!P(s,i)C(s,i)\!+\!S(s,i)\Big]X(s)\!\equiv\!0,
\ea$$
which is {\it not} an open-loop optimal control for any nonzero initial state $x$. In fact, let $(t,x,i)\in[0,1)\ts\dbR\ts\cS$
be an arbitrary but the fixed initial pair with $x\neq0$. By It\^o's formula,
the state process $X^*(\cd)$ corresponding to $(t,x,i)$ and $u^*(\cd)$ is expressed as
$$X^*(s)=x\cd\exp\left\{-2\int_t^s\a(r)dr+\int_t^s\sqrt{2\a(r)}dW(r)\right\},\q~s\in[t,1].$$
Thus,
$$J(t,x,i;u^*(\cd))=\dbE|X^*(1)|^2=x^2>0.$$
On the other hand, let $\bar{u}(\cd)$ be the control defined by
$$\bar{u}(s)\equiv\frac{x}{t-1}\cd\exp\left\{-2\int_t^s\a(r)dr+\int_t^s\sqrt{2\a(r)}dW(r)\right\},\q~s\in[t,1].$$
By the variation of constants formula, the state process $\bar{X}(\cd)$, corresponding to $(t,x,i)$ and $\bar{u}(\cd)$, can be presented by
$$\ba{ll}
\ds \bar{X}(s)=\exp\left\{-2\int_t^s\a(r)dr+\int_t^s\sqrt{2\a(r)}dW(r)\right\} \\
\ns\ds\qq\q\ \cd\left[x+\int_t^s\exp\left\{2\int_t^r\a(v)dv-\int_t^r\sqrt{2\a(v)}dW(v)\right\}\cd\bar{u}(r)dr\right] \\
\ns\ds\qq\ =\exp\left\{-2\int_t^s\a(r)dr+\int_t^s\sqrt{2\a(r)}dW(r)\right\}\cd\left[x+\frac{s-t}{t-1}x\right],\q~s\in[t,1],
\ea$$
which satisfies $\bar{X}(1)=0$. Hence,
$$J(t,x,i;\bar{u}(\cd))=\dbE|\bar{X}(1)|^2=0<J(t,x,i;u^*(\cd)).$$
Since the cost functional is nonnegative, the open-loop control $\bar{u}(\cd)$ is optimal for the
initial pair $(t,x,i)$, but $u^*(\cd)$ is not optimal.
\end{example}

The above example suggests that the usual solvability of the GRE \rf{Riccati-0.1} no longer helpfully handles the open-loop solvability of certain stochastic LQ problems. It is then natural to ask: {\it When Problem (M-SLQ) is merely open-loop solvable, not closed-loop solvable, is it still possible to get a linear state feedback representation for an open-loop optimal control within the framework of Markovian regime switching system?} The goal of this paper is to tackle this problem.

\ms

The contribution of this paper is to study the weak closed-loop solvability of stochastic LQ optimal control problems with Markovian regime switching system. In detail, we provide an alternative characterization of the open-loop solvability of Problem (M-SLQ) using the perturbation approach adopted in \cite{Sun-Li-Yong 2016}. In order to obtain a linear state feedback representation of open-loop optimal control for Problem (M-SLQ), we introduce the notion of weak closed-loop strategies in the circumstance of stochastic LQ optimal control problem of Markovian regime switching system.
We prove that as long as Problem (M-SLQ) is open-loop solvable, there always exists a weak closed-loop strategy whose outcome actually is an open-loop optimal control. Consequently, the open-loop and weak closed-loop solvability of Problem (M-SLQ) are equivalent on $[0, T)$. Comparing with \cite{Wang-Sun-Yong 2018}, this paper further develops the results in \cite{Wang-Sun-Yong 2018} to the case of stochastic LQ optimal control problems with Markovian regime switching system, which could be applied to financial market models with Markov process, such as interest rate, stocks return and volatility. However, the regime switching jumps will bring some difficulties. For example, the first problem is how to define the closed-loop solvability and weak closed-loop solvability in the circumstance of Markovian regime switching system. The second problem is how to prove the equivalent between the open-loop and weak closed-loop solvability of Problem (M-SLQ) in the circumstance of Markovian regime switching system. We will use the methods of \cite{Sun-Li-Yong 2016}, \cite{Wang-Sun-Yong 2018} and \cite{Zhang-Li-Xiong 2018} to overcome these difficulties.

\ms

The rest of the paper is organized as follows. In Section 2, we collect some preliminary results and introduce a few elementary notions for Problem (M-SLQ). Section 3 is devoted to the study of open-loop solvability by a perturbation method. In section 4, we show how to obtain a weak closed-loop optimal strategy and establish the equivalence between open-loop and weak closed-loop solvability. Finally, an example is presented in Section 5 to illustrate the results we obtained.

\section{Preliminaries}

Throughout this paper, and recall from the previous section, let $(\O,\cF,\dbF,\dbP)$ be a complete filtered probability space on which a standard one-dimensional Brownian motion $W=\{W(t); 0\les t < \i \}$ and a continuous time, finite-state, Markov chain $\a=\{\a(t); 0\les t< \i \}$ are defined, where $\dbF=\{\cF_t\}_{t\ges0}$ is the natural filtration of $W$ and $\a$ augmented by all the $\dbP$-null sets in $\cF$. In the rest of our paper, we will use the following notation:
\begin{eqnarray*}
  \begin{array}{ll}
    \mathbb{R}^n &\q \mbox{the } n\mbox{-dimensional Euclidean space};\\
    M^\top & \q\mbox{the transpose of any vector or matrix } M;\\
    \tr[M] &\q \mbox{the trace of a square matrix } M;\\
    \cR(M) & \q\mbox{the range of the matrix } M;\\
   M^{-1}   &\q \mbox{the Moore-Penrose pseudo-inverse of the matrix } M;\\
   \langle \cd\,,\cd\rangle&\q \mbox{the inner products in possibly different Hilbert spaces};\\
    \mathbb{R}^{n\times m} &\q \mbox{the Euclidean space of all } n\times m \mbox{ real matrices endowed with inner}\\
                &\q\hb {product } \langle M, N\rangle \mapsto\tr[M^\top N] \mbox{ and the norm } |M|=\sqrt{\tr[M^\top M]};\\
    \dbS^n &\q \mbox{the set of all }n\times n \mbox{ symmetric matrices},
  \end{array}
\end{eqnarray*}
and for an $\dbS^n$-valued function $F(\cd)$ on $[t,T]$, we use the
notation $F(\cd)\gg0$ to indicate that $F(\cd)$ is uniformly
positive definite on $[t,T]$, i.e., there exists a constant
$\delta>0$ such that
$$F(s)\ges\delta I,\qq\ae~s\in[t,T].$$
Next, for any $t\in[0,T)$ and Euclidean space $\dbH$, we further introduce the following
spaces of functions and processes:
$$\ba{ll}
C([t,T];\dbH)=\Big\{\f:[t,T]\to\dbH\bigm|\f(\cd)\hb{ is
  continuous }\1n\Big\},\\
\ns\ds
L^p(t,T;\dbH)=\left\{\f:[t,T]\to\dbH\biggm|\int_t^T|\f(s)|^pds<\i\right\},\q1\les p<\i,\\
\ns\ds
L^\infty(t,T;\dbH)=\left\{\f:[t,T]\to\dbH\biggm|\esssup_{s\in[t,T]}|\f(s)|<\i\right\},\ea$$
and
$$\ba{ll}
\ns\ds L^2_{\cF_T}(\O;\dbH)=\Big\{\xi:\O\to\dbH\bigm|\xi\hb{ is
  $\cF_T$-measurable, }\dbE|\xi|^2<\i\Big\},\\
\ns\ds
L_\dbF^2(t,T;\dbH)=\bigg\{\f:[t,T]\times\O\to\dbH\bigm|\f(\cd)\hb{ is
    $\dbF$-progressively measurable, }\dbE\int^T_t|\f(s)|^2ds<\i\bigg\},\\
\ns\ds
L_\dbF^2(\O;C([t,T];\dbH))=\bigg\{\f:[t,T]\times\O\to\dbH\bigm|\f(\cd)\hb{
    is $\dbF$-adapted, continuous, }\dbE\left[\sup_{s\in[t,T]}|\f(s)|^2\right]<\i\bigg\},\\
\ns\ds L^2_\dbF(\O;L^1(t,T;\dbH))=\bigg\{\f:[t,T]\times
  \O\to\dbH\bigm|\f(\cd)\hb{ is $\dbF$-progressively measurable, }
  \dbE\left(\int_t^T|\f(s)|ds\right)^2<\i\bigg\}.\ea$$

Now we start to formulate our system. We identify the state space of the chain $\a$ with a finite set $\cS\deq\{1, 2 \dots, D\}$, where $D\in \mathbb{N}$ and suppose that the chain is homogeneous and irreducible. To specify statistical or probabilistic properties of the chain $\a$, for $t\in[0,\i)$, we define the generator
$\l(t)\deq[\l_{ij}(t)]_{i, j = 1, 2, \dots, D}$ of the chain under $\mathbb{P}$.
This is also called the rate matrix, or the $Q$-matrix. Here, for each $i, j = 1, 2, \dots, D$,
$\l_{ij}(t)$ is the constant transition intensity of the chain from state $i$ to state $j$
at time $t$. Note that $\l_{ij}(t) \ges 0$, for $i \neq j$ and
$\sum^{D}_{j = 1} \l_{ij}(t) = 0$, so $\l_{ii}(t) \les 0$. In what follows for each $i, j = 1,
2, \dots, D$ with $i \neq j$, we suppose that $\l_{ij}(t) > 0$, so
$\l_{ii}(t) < 0$. For each fixed $j = 1, 2, \cdots, D$, let $N_j(t)$ be the number of jumps into state $j$ up to time $t$ and set
$$\l_j (t)\deq \int_0^t\l_{\a(s-)\, j}I_{\{\a (s-)\neq j\}}ds=\sum^{D}_{i = 1, i \neq j}\int^{t}_{0}\l_{ij}(s)I_{\{\a(s-)=i\}} d s.$$
Then for each $j=1,2,\cdots, D$, the term $\widetilde{N}_j (t)\deq N_j(t)-\l_j(t)$ is an $(\dbF, \dbP)$-martingale.

\ms

To guarantee the well-posedness of the state equation \rf{state}, we adopt the following assumption:

\ms

{\bf(H1)} For every $i\in\cS$, the coefficients and nonhomogeneous terms of \rf{state} satisfy
$$\left\{\ba{ll}
\ds A(\cd,i)\in L^1(0,T;\dbR^{n\times n}),
\q  B(\cd,i)\in L^\i(0,T;\dbR^{n\times m}),
\q  b(\cd)\in L^2_\dbF(\Om;L^1(0,T;\dbR^n)),\\
\ns\ds C(\cd,i)\in L^2(0,T;\dbR^{n\times n}),
\q     D(\cd,i)\in L^{\i}(0,T;\dbR^{n\times m}),
\q     \si(\cd)\in L^2_\dbF(0,T;\dbR^n).
\ea\right.$$

The following result, whose proof is similar to the result in \cite[Proposition 2.1]{Sun-Yong 2014}, establishes the well-posedness of the state equation under the assumption (H1).

\bl{Appen1} \sl
Under the assumption (H1), for any initial pair $(t,x,i)\in[0,T)\ts\dbR^n\ts\cS$ and control $u(\cd)\in\cU[t,T]$, the state equation \rf{state} has a unique adapted solution $X(\cd)\equiv X(\cd\ ;t,x,i,u(\cd))$. Moreover, there exists a constant $K>0$, independent of $(t,x,i)$ and $u(\cd)$, such that
\bel{11.1.15.42}\dbE\left[\sup_{t\les s\les T}|X(s)|^2\right]\les
K\dbE\left[|x|^2+\Big(\int_t^T|b(s)|ds\Big)^2+\int_t^T|\si(s)|^2ds+\int_t^T|u(s)|^2ds\right].\ee
\el

To ensure that the random variables in the cost functional \rf{cost} are integrable, we assume the following holds:

\ms

{\bf(H2)} For every $i\in\cS$, the weighting coefficients in the cost functional \rf{cost} satisfy
$$\left\{\ba{ll}
\ds G(i)\in\dbS^n,
\q  Q(\cd,i)\in L^1(0,T;\dbS^n),
\q  S(\cd,i)\in L^2(0,T;\dbR^{m\times n}),
\q  R(\cd,i)\in L^\i(0,T;\dbS^m),\\
\ns\ds \tb{g(i)\in L^2_{\cF_T}(\Om;\dbR^n)},
\q q(\cd,i)\in L^2(0,T;\dbR^n),
\q \rho(\cd,i)\in L^2(0,T;\dbR^m).
\ea\right.$$

\begin{remark}\label{11.11.27.1}\rm Suppose that (H1) holds. Then according to Lemma \ref{Appen1}, for any initial pair
 $(t,x,i)\in[0,T)\ts\dbR^n\ts\cS$ and control $u(\cd)\in\cU[t,T]$, the state equation \rf{state} admits a unique (strong) solution $X(\cd)\equiv X(\cd;t,x,i,u(\cd))$ which belongs to the space $L_\dbF^2(\O;C([t,T];\dbH))$.  In addition, if (H2) holds, then the random variables on the right-hand side of \rf{cost} are integrable, and hence Problem (M-SLQ) is well-posed. \end{remark}

Let us recall some basic notions of stochastic LQ optimal control problems.

\begin{definition}[Open-loop]\label{def-open} \rm
Problem (M-SLQ) is said to be
\begin{enumerate}[~~\,\rm(i)]
\item ({\it uniquely}) {\it open-loop solvable
  for an initial pair $(t,x,i)\in[0,T]\ts\dbR^n\ts\cS$} if there exists a (unique)
  $u^*(\cd)=u^*(\cd\ ;t,x,i)\in\cU[t,T]$ (depending on $(t,x,i)$) such that
  \bel{10.15.17.9}J(t,x,i;u^{*}(\cd))\les J(t,x,i;u(\cd)),\qq \forall u(\cd)\in\cU[t,T].\ee
  Such a $u^*(\cd)$ is called an {\it open-loop optimal control} for $(t,x,i)$.
\item ({\it uniquely}) {\it open-loop solvable} if it is (uniquely) open-loop solvable for all the initial pairs $(t,x,i)\in[0,T]\ts\dbR^n\ts\cS$.
\end{enumerate}
\end{definition}

\begin{definition}[Closed-loop]\label{def-closed}\rm
Let $\Th:[t,T]\ts\cS\ra\dbR^{m\ts n}$ to be deterministic function and $v:[t,T]\ts\Om\ra\dbR^m$ be an $\dbF$-progressively measurable process.
\begin{enumerate}[~~\,\rm(i)]
\item We call $(\Th(\cd,\cd),v(\cd))$ a {\it closed-loop strategy} on $[t,T]$ if
    %
    \bel{11.17}\dbE\int_t^T|\Th(s,\a(s))|^2ds<\i,\q \hb{and}\q \dbE\int_t^T|v(s)|^2ds<\i.\ee
   The set of all closed-loop strategies $(\Th(\cd,\cd),v(\cd))$ on $[t,T]$ is denoted by $\sC[t,T]$.
\item A closed-loop strategy $(\Th^*(\cd,\cd),v^*(\cd))\in\sC[t,T]$ is said to be {\it optimal} on $[t,T]$ if
  \bel{10.15.17.48}\ba{ll}\ds J(t,x,i;\Th^*(\cd,\alpha(\cd))X^*(\cd)+v^*(\cd))\les J(t,x,i;\Th(\cd,\alpha(\cd))X(\cd)+v(\cd)),\\
  \ns\ds\qq\qq\qq\q\forall(x,i)\in\dbR^n\ts\cS,\q\forall (\Th(\cd,\cd),v(\cd))\in\sC[t,T],\ea\ee
  where $X^*(\cd)$ is the solution to the {\it closed-loop system} under $(\Th^*(\cd,\cd),v^*(\cd))$:
  \bel{10.15.17.58}\left\{\2n\ba{ll}
  \ds dX^*(s)=\Big\{\big[A(s,\a(s))+B(s,\a(s))\Th^*(s,\a(s))\big]X^*(s)+B(s,\a(s))v^*(s)+b(s)\Big\}ds\\
  \ns\ds\qq\q \ ~~~+\Big\{\big[C(s,\a(s))+D(s,\a(s))\Th^*(s,\a(s))\big]X^*(s)\\
  \ns\ds\qq\qq\qq \ +D(s,\a(s))v^*(s)+\si(s)\Big\}dW(s),\qq s\in[t,T],\\
  \ns\ds X^*(t)=x,
  \ea\right.\ee
  and $X(\cd)$ is the solution to the closed-loop system \rf{10.15.17.58} corresponding to $(\Th(\cd,\cd),v(\cd))$.
\item For any $t\in[0,T)$, if a closed-loop optimal strategy (uniquely) exists on $[t,T]$, Problem (M-SLQ) is {\it (uniquely) closed-loop solvable}.
\end{enumerate}
\end{definition}

\begin{remark}\rm
We emphasize that, in the above definition, $\Th$ is a deterministic function, and in \rf{11.17} the randomness of $\Th(\cd,\a(\cd))$ comes from $\a(\cd)$. Moreover, \rf{10.15.17.48} must be true for all $(x,i)\in\dbR^n\ts\cS$. The same remark applies to the definition below.
\end{remark}

\begin{definition}[Weak closed-loop]\label{def-weak-closed}\rm
Let $\Th:[t,T]\ts\cS\ra\dbR^{m\ts n}$ be a deterministic function and $v:[t,T]\ts\Om\ra\dbR^m$ be an $\dbF$-progressively measurable process such that for any $T'\in[t,T)$,
$$\dbE\int_t^{T'}|\Th(s,\a(s))|^2ds<\i,\q\hb{and}\q\dbE\int_t^{T'}|v(s)|^2ds<\i.$$
\begin{enumerate}[~~\,\rm(i)]
\item We call $(\Th(\cd,\cd),v(\cd))$ a {\it weak closed-loop strategy} on $[t,T)$ if for any initial state $(x,i)\in\dbR^n\ts\cS$, the outcome $u(\cd)\equiv\Th(\cd,\a(\cd))X(\cd)+v(\cd)$ belongs to $\cU[t,T]\equiv L^2_{\dbF}(t,T;\dbR^m)$, where $X(\cd)$ is the solution to the {\it weak closed-loop system}:
  \bel{10.16.15.41}\left\{\2n\ba{ll}
  \ds dX(s)=\Big\{\big[A(s,\a(s))+B(s,\a(s))\Th(s,\a(s))\big]X(s)+B(s,\a(s))v(s)+b(s)\Big\}ds\\
  \ns\ds\qq\q~~~+\Big\{\big[C(s,\a(s))+D(s,\a(s))\Th(s,\a(s))\big]X(s)\\
  \ns\ds\qq\qq\qq +D(s,\a(s))v(s)+\si(s)\Big\}dW(s),\qq s\in[t,T],\\
  \ns\ds X(t)=x.
  \ea\right.\ee
  The set of all weak closed-loop strategies is denoted by $\sC_w[t,T]$.
\item A weak closed-loop strategy $(\Th^*(\cd,\cd),v^*(\cd))\in\sC_w[t,T]$ is said to be {\it optimal} on $[t,T)$ if
  \bel{10.15.17.60}\ba{ll}\ds J(t,x,i;\Th^*(\cd,\a(\cd))X^*(\cd)+v^*(\cd))\les J(t,x,i;\Th(\cd,\a(\cd))X(\cd)+v(\cd)),\\
  \ns\ds\qq\qq\qq\q\forall(x,i)\in\dbR^n\ts\cS,\q\forall (\Th(\cd,\cd),v(\cd))\in\sC_w[t,T],\ea\ee
  where $X(\cd)$ is the solution of the closed-loop system \rf{10.16.15.41}, and $X^*(\cd)$ is the solution to the weak closed-loop system \rf{10.16.15.41} corresponding to $(\Th^*(\cd,\cd),v^*(\cd))$.
\item For any $t\in[0,T)$, if a weak closed-loop optimal strategy (uniquely) exists on $[t,T)$, Problem (M-SLQ) is {\it (uniquely) weakly closed-loop solvable}.
\end{enumerate}
\end{definition}

\section{Open-Loop Solvability: A Perturbation Approach}

In this section, we study the open-loop solvability of Problem (M-SLQ) through a perturbation approach.
We begin by assuming that, for any choice of $(t,i)\in[0,T)\ts\cS$,
\bel{10.23.9.53}J^0(t,0,i;u(\cd))\ges0,\q~\forall u(\cd)\in\cU[t,T],\ee
which is necessary for the open-loop solvability of Problem (M-SLQ) according to \cite[Theorem 4.1]{Zhang-Li-Xiong 2018}. In fact, assumption \rf{10.23.9.53} means that $u(\cd)\ra J^0(t,0,i;u(\cd))$ is convex, and one can actually prove that assumption \rf{10.23.9.53} implies the convexity of the mapping $u(\cd)\ra J(t,x,i;u(\cd))$ for any choice of $(t,x,i)\in[0,T)\ts\dbR^n\ts\cS$ (see \cite{Sun-Li-Yong 2016,Zhang-Li-Xiong 2018}).

\ms

For $\e>0$, consider the LQ problem of minimizing the perturbed cost functional
\bel{pertur-cost}\ba{ll}
\ds J_\e(t,x,i;u(\cd))\deq J(t,x,i;u(\cd))+\e\dbE\int_t^T|u(s)|^2ds\\
\ns\ds\qq\qq\qq \ =\dbE\Bigg\{\Big\lan G(\a(T))X(T),X(T)\Big\ran+2\Big\lan g(\a(T)),X(T)\Big\ran\\
\ns\ns\ds\qq\qq\qq\qq \ \ \,+\int_t^T\Bigg[\llan\begin{pmatrix}Q(s,\a(s))&S(s,\a(s))^\top\\S(s,\a(s))&R(s,\a(s))+\e I_m\end{pmatrix}
                                \begin{pmatrix}X(s)\\ u(s)\end{pmatrix},
                                \begin{pmatrix}X(s)\\u(s)\end{pmatrix}\rran \\
\ns\ns\ds\qq\qq\qq\qq\qq\qq+2\llan\begin{pmatrix}q(s,\a(s))\\ \rho(s,\a(s))\end{pmatrix},\begin{pmatrix}X(s)\\ u(s)\end{pmatrix}\rran \Bigg]ds\Bigg\},
\ea\ee
subject to the state equation \rf{state}. We denote this perturbed LQ problem by \bf Problem (M-SLQ)$_\e$ \rm and its value function by $V_\e(\cd,\cd,\cd)$. Notice that the cost functional $J^0_\e(t,x,i;u(\cd))$ of the homogeneous LQ problem associated with Problem (M-SLQ)$_\e$ is
$$J^0_\e(t,x,i;u(\cd))=J^0(t,x,i;u(\cd))+\e\dbE\int_t^T|u(s)|^2ds,$$
which, by \rf{10.23.9.53}, satisfies
$$J^0_\e(t,0,i;u(\cd))\ges\e\dbE\int_t^T|u(s)|^2ds.$$
The Riccati equations associated with Problem (M-SLQ)$_\e$ follow
\bel{Riccati}\left\{\ba{ll}
\ds \dot P_\e(s,i)+P_\e(s,i)A(s,i)+A(s,i)^\top P_\e(s,i)+C(s,i)^\top P_\e(s,i)C(s,i)+Q(s,i)\\
\ns\ds\qq\q\ -\hat{S}_\e(s,i)^\top[\hat{R}_\e(s,i)+\e I_m]^{-1}\hat{S}_\e(s,i)+\sum_{k=1}^D\l_{ik}(s)P_\e(s,k)=0,\q \ae~s\in[0,T],\ i\in\cS,\\
\ns\ds P_\e(T,i)=G(i),
\ea\right.\ee
where for every $(s,i)\in[0,T]\ts\cS$ and $\e>0$,
\begin{equation}
  \label{eq:hatsr}
  \begin{aligned}
    \hat S_\e(s,i)&\deq B(s,i)^\top P_\e(s,i)+ D(s,i)^\top P_\e(s,i)C(s,i)+S(s,i),\\
    \hat R_\e(s,i)&\deq R(s,i)+D(s,i)^\top P_\e(s,i)D(s,i).
  \end{aligned}
\end{equation}
%
We say that a solution $P_\e(\cd,\cd)\in C([0,T]\times \cS;\dbS^n)$ of (\ref{Riccati}) is said to be {\it regular} if
\begin{align}
      \cR\big(\hat{S}_\e(s,i)\big)&\subseteq\cR\big(\hat{R}_\e(s,i)\big),\q
      \ae~s\in[0,T],\ i\in\cS,\label{11.2.16.29}\\
      \hat{R}_\e(\cd,i)^{-1}\hat{S}_\e(\cd,i)&\in L^2(0,T;\dbR^{m\times n}),\q~i\in\cS,\label{11.2.16.30}\\
      \hat{R}_\e(s,i)&\ges0,\q\ae~s\in[0,T],\ i\in\cS. \label{eq:regular}
\end{align}
A solution $P_\e(\cd,\cd)$ of (\ref{Riccati}) is said to be {\it strongly regular} if
\begin{eqnarray}
    \label{strong-regular}\hat{R}_\e(s,i)\ges \l  I,\q\ae~s\in[0,T],
  \end{eqnarray}
for some $\l>0$. The system of Riccati equations (\ref{Riccati}) is said to be ({\it strongly}) {\it regularly solvable}, if it admits a (strongly) regular solution. Clearly, condition (\ref{strong-regular}) implies \rf{11.2.16.29}-\rf{eq:regular}. Thus, a strongly regular solution $P_\e(\cd,\cd)$ must be regular.
Moreover, it follows from \cite[Theorem 6.3]{Zhang-Li-Xiong 2018} that, under the assumption \rf{10.23.9.53}, Riccati equations \rf{Riccati} have a unique strongly regular solution $P_\e(\cd,\cd)\in C([0,T]\times\cS;\dbS^n)$, and from \rf{eq:regular}, we have
$$\hat R_\e(s,i)+\e I_m\ges\e I_m, \q \ae~s\in[0,T].$$
Furthermore, let $(\eta_\e(\cd),\z_\e(\cd), \xi^\e_1(\cd),\cds,\xi^\e_D(\cd))$ be the adapted solution of the following BSDE:
\bel{eta-zeta-xi}\left\{\2n\ba{ll}
\ds d\eta_\e(s)=-\Big\{\big[A(s,\a(s))+B(s,\a(s))\Th_\e(s,\a(s))\big]^\top\eta_\e(s)\\
\ns\ns\ds \qq\qq\q~+\big[C(s,\a(s))+D(s,\a(s))\Th_\e(s,\a(s))\big]^\top\z_\e(s)\\
\ns\ns\ds \qq\qq\q~+\big[C(s,\a(s))+D(s,\a(s))\Th_\e(s,\a(s))\big]^\top P_\e(s,\a(s))\si(s)\\
\ns\ns\ds \qq\qq\q~+\Th_\e(s,\a(s))^\top \rho(s,\a(s))+P_\e(s,\a(s))b(s)+q(s,\a(s))\Big\}ds\\
\ns\ns\ds \qq\qq+\z_\e(s) dW(s)+\sum_{k=1}^D\xi^\e_k(s)d\wt{N}_k(s),\q s\in[0,T],\\
\ns\ns\ds \eta_\e(T)=g(i),\ea\right.\ee
and let $X_\e(\cd)$ be the solution of the following closed-loop system:
\begin{align}
 \label{eclosed-loop-state}\left\{\2n\ba{ll}
 \ns\ns\ds dX_\e(s)=\Big\{\big[A(s,\a(s))+B(s,\a(s))\Th_\e(s,\a(s))\big]X_\e(s)+B(s,\a(s))v_\e(s)+b(s)\Big\}ds\\
 \ns\ns\ds\qq\qq~+\Big\{\big[C(s,\a(s))+D(s,\a(s))\Th_\e(s,\a(s))\big]X_\e(s)\\
 \ns\ds\qq\qq\qq\ +D(s,\a(s))v_\e(s)+\si(s)\Big\}dW(s),\q s\in[t,T], \\
 \ns\ns\ds X_\e(t)=x,\ea\right.
\end{align}
where $\Th_\e:[0,T]\ts\cS\ra\dbR^{m\ts n}$ and $v_\e:[0,T]\ts\Om\ra\dbR^m$ 
are defined by
\begin{align}
  \Th_\e(s,\a(s)) &= - [\hat R_\e(s,\a(s))+\e I_m]^{-1}\hat S_\e(s,\a(s)),\label{10.22.22.37}\\
  v_\e(s) &= - [\hat R_\e(s,\a(s))+\e I_m]^{-1}\hat\rho_\e(s,\a(s)),\label{10.22.22.38}
\end{align}
with
\begin{align}
    \label{eq:hatrrho}
    \hat \rho_\e(s,i)&=B(s,i)^\top\eta_\e(s)+D(s,i)^\top\z_\e(s)+D(s,i)^\top P_\e(s,i)\si(s)+\rho(s,i).
\end{align}
Then from Theorem 5.2 and Corollary 6.5 in Zhang, Li and Xiong \cite{Zhang-Li-Xiong 2018}, the unique open-loop optimal control of Problem (M-SLQ)$_\e$, for the initial pair $(t,x,i)$, is given by
\bel{10.22.22.26}u_\e(s)=\Th_\e(s,\a(s))X_\e(s)+v_\e(s), \q~s\in[t,T].\ee

\begin{remark}\rm
In equation \rf{eta-zeta-xi}, the term of Markovian switching is the sum $\sum\limits^D_{k=1}\xi^\e_k(s)d\tilde{N}_k(s)$, which is equivalent to the sum $\sum\limits^D_{k,l=1}\xi^\e_{kl}(s)d\tilde{N}_{kl}(s)$ as in Donnelly and Heunis \cite{DH12}, and Li and Zheng \cite{LZ15}, because their filtration is the same, where $\tilde{N}_{kl}(s)$ is defined as follows:
$$\tilde{N}_{kk}(t)\equiv0,\qq\tilde{N}_{kl}(t)=N_{kl}(t)-\tilde{\l}_{kl}(t),\q \hb{for } 1\les k,l\les D,\ k\neq l,$$
where $\tilde{N}_{kl}(t)$ is the number of jumps from state $k$ to state $l$ up to time $t$ and
$\tilde{\l}_{kl}(t)=\int_0^t\l_{kl}(s)I_{\{\a(s-)=k\}}ds$.
In fact, for simplicity we could let $ N^1=( N_1,..., N_D)$ and $ N^2=( N_{jk},...)$.
On the one hand, it is easy to know that $\cF^{ N^1}_t\subseteq\cF^{ N^2}_t$.
On the other hand, since
$$\{\a(t)=k\}=\{ \exists s<t,\ \D N^1(r)=0,\ s<r<t,\ \D N_k(s)=1 \},$$
where $\D N_k(s)= N_k(s)- N_k(s-)$, we have $\a(t)\in\cF_t^{ N^1}$. So $N^2(t)\equiv( N_{jk}(t),...)\in \cF^\a_t\subseteq\cF^{ N^1}_t$, which implies that $\cF^{ N^2}_t\subseteq\cF^{ N^1}_t$.
So their filtration is the same.
\end{remark}

Before studying the main result of this section, we prove the following lemma.
\bl{10.22.17.22} \sl
Under Assumptions (H1) and (H2), for any initial pair $(t,x,i)\in[0,T)\ts\dbR^n\ts\cS$, one has
\bel{10.22.17.24}\lim_{\e\da 0}V_\e(t,x,i)= V(t,x,i).\ee
\el

\begin{proof}
Let $(t,x,i)\in[0,T)\ts\dbR^n\ts\cS$ be fixed. On the one hand, for any $\e>0$ and any $u(\cd)\in\cU[t,T]$, we have
$$J_\e(t,x,i;u(\cd))=J(t,x,i;u(\cd))+\e\dbE\int_t^T|u(s)|^2ds\ges J(t,x,i;u(\cd))\ges V(t,x,i).$$
Taking the infimum over all $u(\cd)\in\cU[t,T]$ on the left hand side implies that
\bel{10.22.17.40}V_\e(t,x,i)\ges V(t,x,i).\ee
On the other hand, if $V(t,x,i)$ is finite, then for any $\d>0$, we can find a $u^\d(\cd)\in\cU[t,T]$, independent of $\e>0$, such that
$$J(t,x,i;u^\d(\cd))\les V(t,x,i)+\d.$$
It follows that
$$V_\e(t,x,i)\les J(t,x,i;u^\d(\cd))+\e\dbE\int_t^T|u^\d(s)|^2ds\les V(t,x,i)+\d+\e\dbE\int_t^T|u^\d(s)|^2ds.$$
Letting $\e\ra0$, we obtain
\bel{10.22.17.48}\lim_{\e\da0}V_\e(t,x,i)\les V(t,x,i)+\d.\ee
Since $\d>0$ is arbitrary, by combining \rf{10.22.17.40} and \rf{10.22.17.48}, we obtain \rf{10.22.17.24}. A similar argument applies to the case when $V(t,x,i)=-\i$.
\end{proof}

Now, we present the main result of this section, which provides a characterization of the open-loop solvability of Problem (M-SLQ) in terms of the family $\{u_\e(\cd)\}_{\e>0}$.

\begin{theorem}\label{10.23.16.22} \sl
Let Assumptions (H1)-(H2) and \rf{10.23.9.53} hold. For any given initial pair $(t,x,i)\in[0,T)\ts\dbR^n\ts\cS$,
let $u_\e(\cd)$ be defined by \rf{10.22.22.26}, which is the outcome of the closed-loop optimal strategy
$(\Th_\e(\cd,\cd),v_\e(\cd))$ of Problem (M-SLQ)$_\e$. Then the following statements are equivalent:
\begin{enumerate}[~~\,\rm(i)]
\item Problem (M-SLQ) is open-loop solvable at $(t,x,i)$.
\item The family $\{u_\e(\cd)\}_{\e>0}$ is bounded in $L^2_{\dbF}(t,T;\dbR^m)$, i.e.,
$$\sup_{\e>0}\dbE\int_t^T|u_\e(s)|^2ds<\i.$$
\item The family $\{u_\e(\cd)\}_{\e>0}$ is convergent strongly in $L^2_{\dbF}(t,T;\dbR^m)$ as $\e\ra0$.
\end{enumerate}
\end{theorem}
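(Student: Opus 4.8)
The plan is to prove the cyclic chain (i)$\,\Ra\,$(ii)$\,\Ra\,$(iii)$\,\Ra\,$(i). The two workhorses are Lemma~\ref{10.22.17.22} (which gives $V_\e(t,x,i)\to V(t,x,i)$ as $\e\da0$) and the completion-of-squares identity at the minimizer of the perturbed problem: since $u_\e(\cd)$ minimizes the quadratic functional $J_\e(t,x,i;\cd)$, and since the pure quadratic part of $J_\e(t,x,i;\cd)$ is exactly $J^0_\e(t,0,i;\cd)=J^0(t,0,i;\cd)+\e\dbE\int_t^T|\cd|^2ds$, one has
$$J_\e(t,x,i;u(\cd))-V_\e(t,x,i)=J^0_\e(t,0,i;u(\cd)-u_\e(\cd))\ges\e\,\dbE\int_t^T|u(s)-u_\e(s)|^2ds,\qq\forall u(\cd)\in\cU[t,T],$$
where the final inequality uses assumption~\rf{10.23.9.53}. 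I would record this identity first, since every implication leans on it or on the structure behind it.

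For (i)$\,\Ra\,$(ii), let $u^*(\cd)$ be open-loop optimal at $(t,x,i)$, so that $J(t,x,i;u^*(\cd))=V(t,x,i)$ is finite. Testing $V_\e$ against the admissible control $u^*(\cd)$ gives $V_\e(t,x,i)\les V(t,x,i)+\e\dbE\int_t^T|u^*(s)|^2ds$, whereas from $V_\e(t,x,i)=J(t,x,i;u_\e(\cd))+\e\dbE\int_t^T|u_\e(s)|^2ds\ges V(t,x,i)+\e\dbE\int_t^T|u_\e(s)|^2ds$ one gets the reverse lower bound. Cancelling $V(t,x,i)$ and dividing by $\e$ yields $\dbE\int_t^T|u_\e(s)|^2ds\les\dbE\int_t^T|u^*(s)|^2ds$, a bound uniform in $\e$.

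For (ii)$\,\Ra\,$(iii), I would apply the identity twice for $0<\e'<\e$: once for $J_\e$ tested at $u_{\e'}(\cd)$ and once for $J_{\e'}$ tested at $u_\e(\cd)$. Substituting $J(t,x,i;u_\e(\cd))=V_\e(t,x,i)-\e\dbE\int_t^T|u_\e(s)|^2ds$ (and the analogue for $\e'$), the value-function terms cancel upon adding the two inequalities, leaving
$$(\e+\e')\,\dbE\int_t^T|u_\e(s)-u_{\e'}(s)|^2ds\les(\e-\e')\Big[\dbE\int_t^T|u_{\e'}(s)|^2ds-\dbE\int_t^T|u_\e(s)|^2ds\Big].$$
This forces $\phi(\e)\deq\dbE\int_t^T|u_\e(s)|^2ds$ to be non-increasing in $\e$; bounded by (ii), it converges as $\e\da0$. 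Since $(\e-\e')/(\e+\e')\les1$, the display gives $\dbE\int_t^T|u_\e-u_{\e'}|^2ds\les\phi(\e')-\phi(\e)\to0$, so $\{u_\e(\cd)\}$ is Cauchy and converges strongly. I expect this step to be the main obstacle: it requires both the exact quadratic identity in the regime-switching setting and the careful cancellation that produces the Cauchy estimate from boundedness and monotonicity alone.

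Finally, for (iii)$\,\Ra\,$(i), suppose $u_\e(\cd)\to u^*(\cd)$ in $L^2_\dbF(t,T;\dbR^m)$. The a priori estimate~\rf{11.1.15.42} makes the solution map $u(\cd)\mapsto X(\cd)$ Lipschitz, so under (H1)--(H2) the quadratic functional $u(\cd)\mapsto J(t,x,i;u(\cd))$ is continuous on $L^2_\dbF(t,T;\dbR^m)$. As $\{u_\e(\cd)\}$ is bounded, $\e\dbE\int_t^T|u_\e(s)|^2ds\to0$, and therefore $J(t,x,i;u^*(\cd))=\lim_{\e\da0}J_\e(t,x,i;u_\e(\cd))=\lim_{\e\da0}V_\e(t,x,i)=V(t,x,i)$ by Lemma~\ref{10.22.17.22}; hence $u^*(\cd)$ is open-loop optimal at $(t,x,i)$.
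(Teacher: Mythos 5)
Your proof is correct, and the interesting part is that the hard implication (ii)\,$\Rightarrow$\,(iii) is handled by a genuinely different route than the paper's. The paper extracts weakly convergent subsequences from the bounded family, uses convexity and continuity of $u(\cd)\mapsto J(t,x,i;u(\cd))$ to get weak sequential lower semicontinuity, shows any two weak subsequential limits $u_1^*,u_2^*$ are both optimal, and then reruns the (i)\,$\Rightarrow$\,(ii) bound with the midpoint $\tfrac12(u_1^*+u_2^*)$ as reference control to force $u_1^*=u_2^*$; strong convergence is then obtained by sandwiching the norms ($\limsup_{\e\ra0}\|u_\e\|\les\|u^*\|\les\liminf_{\e\ra0}\|u_\e\|$). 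You instead exploit the completion-of-squares identity $J_\e(t,x,i;u(\cd))-V_\e(t,x,i)=J^0_\e(t,0,i;u(\cd)-u_\e(\cd))\ges\e\,\dbE\int_t^T|u(s)-u_\e(s)|^2ds$, valid because $u_\e(\cd)$ is the minimizer of a quadratic functional on the Hilbert space $\cU[t,T]$ whose pure quadratic part is $J^0_\e(t,0,i;\cd)$, and testing it symmetrically at $u_{\e'}(\cd)$ and $u_\e(\cd)$ (for $\e'<\e$) gives the quantitative Cauchy estimate $\dbE\int_t^T|u_\e-u_{\e'}|^2ds\les\phi(\e')-\phi(\e)$, where $\phi(\e)\deq\dbE\int_t^T|u_\e|^2ds$ is monotone in $\e$ and bounded by (ii). This is more elementary (no weak compactness or lower semicontinuity is needed), it is quantitative, and it yields the monotonicity of $\e\mapsto\dbE\int_t^T|u_\e|^2ds$ as a by-product; its only cost is that the Hilbert-space quadratic representation of $J_\e(t,x,i;\cd)$ and the first-order optimality condition satisfied by $u_\e(\cd)$ should be recorded as a lemma (they hold in the regime-switching setting exactly as in the classical case, since the state is affine in $(x,u(\cd))$ and the chain enters only through the coefficients -- the paper itself leans on the same structure when it invokes convexity). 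Likewise, your (iii)\,$\Rightarrow$\,(i) via strong continuity of $J$ and Lemma \ref{10.22.17.22} is simpler than the paper's (ii)\,$\Rightarrow$\,(i), which uses weak lower semicontinuity along a weakly convergent subsequence; both are valid, and your cyclic chain (i)\,$\Rightarrow$\,(ii)\,$\Rightarrow$\,(iii)\,$\Rightarrow$\,(i) covers all the stated equivalences.
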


\begin{proof}
We begin by proving the implication (i) $\Ra$ (ii).
Let $v^*(\cd)$ be an open-loop optimal control of Problem (M-SLQ) for the initial pair $(t,x,i)$. Then for any $\e>0$,
\bel{10.23.17.44}\ba{ll}
\ds V_\e(t,x,i)\les J_\e(t,x,i;v^*(\cd))=J(t,x,i;v^*(\cd))+\e\dbE\int_t^T |v^*(s)|^2ds\\
\ns\ds\qq\qq \ = V(t,x,i)+\e\dbE\int_t^T |v^*(s)|^2ds.
\ea\ee
On the other hand, since $u_\e(\cd)$ is optimal for Problem (M-SLQ)$_\e$ with respect to $(t,x,i)$, we have
\bel{10.23.21.28}\ba{ll}
\ds V_\e(t,x,i)= J_\e(t,x,i;v_\e(\cd))=J(t,x,i;v_\e(\cd))+\e\dbE\int_t^T |v_\e(s)|^2ds\\
\ns\ds\qq\qq \ \ges V(t,x,i)+\e\dbE\int_t^T |v_\e(s)|^2ds.
\ea\ee
Combining \rf{10.23.17.44} and \rf{10.23.21.28} yields that
\bel{10.23.21.30}\dbE\int_t^T|u_\e(s)|^2ds\les\frac{V_\e(t,x,i)-V(t,x,i)}{\e}\les\dbE\int_t^T|v^*(s)|^2ds.\ee
This shows that $\{u_\e(\cd)\}_{\e>0}$ is bounded in $L^2_{\dbF}(t,T;\dbR^m)$.

\ms

For (ii) $\Ra$ (i), the proof is similar to \cite{Wang-Sun-Yong 2018} (See Remark \ref{br-1} below), and
%
the implication (iii) $\Ra$ (ii) is trivially true.

\ms

Finally, we prove the implication (ii) $\Ra$  (iii). We divide the proof into two steps.

\ms

{\it Step 1: The family $\{u_\e(\cd)\}_{\e>0}$ converges weakly to an open-loop optimal control of Problem (M-SLQ) for the initial pair $(t,x,i)$ as $\e\ra0$.}

\ms

To verify this, it suffices to show that every weakly convergent subsequence of $\{u_\e(\cd)\}_{\e>0}$ has the same weak limit which is an open-loop optimal control of Problem (M-SLQ) for $(t,x,i)$. Let $u^*_i(\cd)$, $i=1,2$ be the weak limits of two different weakly convergent subsequences $\{u_{i,\e_k}(\cd)\}_{k=1}^\i$ $(i=1,2)$ of $\{u_\e(\cd)\}_{\e>0}$. The same argument as in the proof of (ii) $\Ra$ (i) shows that both $u^*_1(\cd)$ and $u^*_2(\cd)$ are optimal for $(t,x,i)$. Thus, recalling that the mapping $u(\cd)\mapsto J(t, x,i; u(\cd))$ is convex, we have
$$J\left(t,x,i;\frac{u^*_1(\cd)+u^*_2(\cd)}{2}\right)
\les\frac{1}{2}J(t,x,i;u^*_1(\cd))+\frac{1}{2}J(t,x,i;u^*_2(\cd))=V(t,x,i).$$
This means that $\frac{u^*_1(\cd)+u^*_2(\cd)}{2}$ is also optimal for Problem (M-SLQ) with respect to $(t,x,i)$.
Then we can repeat the argument employed in the proof of (i) $\Ra$ (ii), replacing $v^*(\cd)$ by $\frac{u^*_1(\cd)+u^*_2(\cd)}{2}$ to obtain (see \rf{10.23.21.30})
$$\dbE\int_t^T|u_{i,\e_k}(s)|^2ds\les\dbE\int_t^T\Big(\frac{u^*_1(s)+u^*_2(s)}{2}\Big)^2ds,\q~i=1,2.$$
Now, note that
  \begin{align*}
    0\les&~\dbE\int_t^T|u_{i,\e_k}(s)-u_{i}^*(s)|^2ds
     =\dbE\int_t^T\[|u_{i,\e_k}(s)|^2-2\langle u_{i,\e_k}(s),u_{i}^*(s)\rangle+|u_i^*(s)|^2\]ds,
  \end{align*}
which implies that
  \begin{align*}
   2\dbE\int_t^T\langle u_{i,\e_k}(s),u_{i}^*(s)\rangle ds-\dbE\int_t^T|u_i^*(s)|^2ds\les
   \dbE\int_t^T|u_{i,\e_k}(s)|^2ds.
  \end{align*}
Taking inferior limits then yields
  \begin{align*}
   \dbE\int_t^T|u_i^*(s)|^2ds=&~2\liminf_{\e_k\ra0}\dbE\int_t^T\langle u_{i,\e_k}(s),u_{i}^*(s)\rangle ds-\dbE\int_t^T|u_i^*(s)|^2ds\\
   \les&~\liminf_{\e_k\ra0}\dbE\int_t^T|u_{i,\e_k}(s)|^2ds
   \les\dbE\int_t^T\Big(\frac{u^*_1(s)+u^*_2(s)}{2}\Big)^2ds\q~ i=1,2.
  \end{align*}
Adding the above two inequalities and then multiplying by 2, we get
$$2\left[\dbE\int_t^T|u^*_{1}(s)|^2ds+\dbE\int_t^T|u^*_{2}(s)|^2ds\right]
\les\dbE\int_t^T|u^*_1(s)+u^*_2(s)|^2ds,$$
or equivalently (by shifting the integral on the right-hand side to the left-hand side),
$$\dbE\int_t^T|u^*_1(s)-u^*_2(s)|^2ds\les0.$$
It follows that $u^*_1(\cd)=u^*_2(\cd)$, which establishes the claim.

\ms

{\it Step 2: The family $\{u_\e(\cd)\}_{\e>0}$ converges strongly as $\e\ra0$.}

\ms

According to Step 1, the family $\{u_\e(\cd)\}_{\e>0}$ converges weakly to an open-loop optimal
control $u^*(\cd)$ of Problem (M-SLQ) for $(t,x,i)$ as $\e\ra0$. By repeating the argument employed in the proof of (i) $\Ra$ (ii) with $u^*(\cd)$ replacing $v^*(\cd)$, we obtain
\bel{10.24.11.35}\dbE\int_t^T|u_\e(s)|^2ds\les\dbE\int_t^T|u^*(s)|^2ds,\q~\e>0.\ee
On the other hand, since $u^*(\cd)$ is the weak limit of $\{u_\e(\cd)\}_{\e>0}$, we have
\bel{10.24.11.44}\dbE\int_t^T|u^*(s)|^2ds\les\liminf_{\e\ra0}\dbE\int_t^T|u_\e(s)|^2ds.\ee
Combining \rf{10.24.11.35} and \rf{10.24.11.44}, we see that $\dbE\int_t^T|u_\e(s)|^2ds$ actually has the limit
$\dbE\int_t^T|u^*(s)|^2ds$. Therefore (recalling that $\{u_\e(\cd)\}_{\e>0}$ converges weakly to $u^*(\cd)$),
$$\ba{ll}
\ds \lim_{\e\ra0}\dbE\int_t^T|u_\e(s)-u^*(s)|^2ds \\
\ns\ds=\lim_{\e\ra0}\left[\dbE\int_t^T|u_\e(s)|^2ds+\dbE\int_t^T|u^*(s)|^2ds
-2\dbE\int_t^T\langle u^*(s),u_\e(s)\rangle ds\right] \\
\ns\ds0,
\ea$$
which means that $\{u_\e(\cd)\}_{\e>0}$ converges strongly to $u^*(\cd)$ as $\e\ra0$.
\end{proof}

\br{br-1}\rm
A similar result recently appeared in Zhang, Li and Xiong \cite{Zhang-Li-Xiong 2018}, which asserts that if Problem (M-SLQ) is
open-loop solvable at $(t,x,i)$, then the limit of any weakly/strongly convergent subsequence
of $\{u_\e(\cd)\}_{\e>0}$ is an open-loop optimal control for $(t,x,i)$. Our result sharpens that in \cite{Zhang-Li-Xiong 2018} by showing the family $\{u_\e(\cd)\}_{\e>0}$ itself is strongly convergent when Problem (M-SLQ) is open-loop solvable. This improvement has at least two advantages. First, it serves as a crucial bridge to the weak closed-loop solvability presented in the next section. Second, it is much more convenient for computational purposes because subsequence extraction is not required.
\er

\br{br-9}\rm
In Example \ref{11.2.21.47}, since $B=1$ and $D=S=R=0$, we have
\begin{equation*}
  \begin{aligned}
    \hat S_\e(s,i)&\deq B(s,i)^\top P_\e(s,i)+ D(s,i)^\top P_\e(s,i)C(s,i)+S(s,i)=P_\e(s,i) \ \hb{ with }P_\e(T,i)=1,\\
    \hat R_\e(s,i)&\deq R(s,i)+D(s,i)^\top P_\e(s,i)D(s,i)=0.
  \end{aligned}
\end{equation*}
So the condition
$\cR\big(\hat{S}_\e(s,i)\big)\subseteq\cR\big(\hat{R}_\e(s,i)\big),\ \ae~s\in[0,T],\ i\in\cS$ is not satisfied, which implies that GRE \rf{11.2.22.06} has no regular solution.
\er

\section{Weak Closed-Loop Solvability}

In this section, we study the equivalence between open-loop and weak closed-loop solvabilities of Problem (M-SLQ). We shall show that $\Th_\e(\cd,\cd)$ and $v_\e(\cd)$ defined by \rf{10.22.22.37} and \rf{10.22.22.38} converge locally in $[0,T)$, and that the limit pair $(\Th^*(\cd,\cd),v^*(\cd))$ is a weak closed-loop optimal strategy.

\ms

We start with a simple lemma, which enables us to work separately with $\Th_\e(\cd,\cd)$ and $v_\e(\cd)$. Recall that the associated Problem (M-SLQ)$^0$ is to minimize \rf{cost-0} subject to \rf{state-0}.

\bl{lemma-2}\rm
Under Assumptions (H1) and (H2), if Problem (M-SLQ) is open-loop solvable, then so is Problem (M-SLQ)$^0$.
\el

\begin{proof}
For arbitrary $(t,x,i)\in[0,T)\ts\dbR^n\ts\cS$, we note that if $b(\cd,\cd),\si(\cd,\cd),g(\cd),q(\cd,\cd),\rho(\cd,\cd)=0$, then the adapted solution
$(\eta_\e(\cd),\z_\e(\cd), \xi^\e_1(\cd),\cds,\xi^\e_D(\cd))$ to BSDE \rf{eta-zeta-xi} is identically zero, and hence the process $v_\e(\cd)$ defined by \rf{10.22.22.38} is also identically zero.
By Theorem \ref{10.23.16.22}, to prove that Problem (M-SLQ)$^0$ is open-loop solvable at $(t,x,i)$, we need to verify that the family $\{u_\e(\cd)\}_{\e>0}$ is bounded in $L^2_{\dbF}(t,T;\dbR^m)$, where (see \rf{10.22.22.26} and note that $v_\e(\cd)=0$),
\bel{10.26.16.16}u_\e(\cd)=\Th_\e(\cd,\a(\cd))X_\e(\cd),\ee
with $X_\e(\cd)$ is the solution to the following equation:
\begin{align}
 \label{eclosed-loop-state-2}\left\{\2n\ba{ll}
 \ns\ns\ds dX_\e(s)=\big[A(s,\a(s))+B(s,\a(s))\Th_\e(s,\a(s))\big]X_\e(s)ds\\
 \ns\ns\ds\qq\qq~ +\big[C(s,\a(s))+D(s,\a(s))\Th_\e(s,\a(s))\big]X_\e(s)dW(s),\q s\in[t,T], \\
 \ns\ns\ds X_\e(t)=x.\ea\right.
\end{align}
To this end, we return to Problem (M-SLQ). Let $v_\e(\cd)$ be defined in \rf{10.22.22.38} and denote by
$X_\e( \cd\ ;t,x,i)$ and $X_\e( \cd\ ;t,0,i)$ solutions to \rf{eclosed-loop-state} with respect to the initial pairs $(t,x,i)$ and $(t,0,i)$, respectively. Since Problem (M-SLQ) is open-loop solvable at both $(t,x,i)$ and $(t,0,i)$, by Theorem \ref{10.23.16.22}, the families
\bel{10.26.16.19}\ba{ll}
\ds u_\e(s;t,x,i)\deq\Th_\e(s,\a(s))X_\e(s;t,x,i)+v_\e(s),\\
\ns\ds u_\e(s;t,0,i)\deq\Th_\e(s,\a(s))X_\e(s;t,0,i)+v_\e(s),
\ea\q~s\in[t,T],\ee
are bounded in $L^2_{\dbF}(t,T;\dbR^m)$. Note that due to that the process $v_\e(\cd)$ is independent of the
initial state, the difference $X_\e( \cd\ ;t,x,i)-X_\e( \cd\ ;t,0,i)$ also satisfies the same equation \rf{eclosed-loop-state-2}. Then by the uniqueness of adapted solutions of SDEs, we obtain that
$$X_\e(\cd)=X_\e( \cd\ ;t,x,i)-X_\e( \cd\ ;t,0,i),$$
which, combining \rf{10.26.16.16} and \rf{10.26.16.19}, implies that
$$u_\e(\cd)=u_\e( \cd ,t,x,i)-u_\e( \cd ,t,0,i).$$
Since $\{u_\e( \cd ,t,x,i)\}_{\e>0}$ and  $\{u_\e( \cd ,t,0,i)\}_{\e>0}$ are bounded in
$L^2_{\dbF}(t,T;\dbR^m)$, so is $\{u_\e(\cd)\}_{\e>0}$. Finally, it follows from Theorem \ref{10.23.16.22} that
Problem (M-SLQ)$^0$ is open-loop solvable.
\end{proof}

Next, we prove that the family $\{\Th_\e(\cd,\cd)\}_{\e>0}$ defined by \rf{10.22.22.37} is locally convergent in $[0,T)$.

\bp{10.26.17.22} \sl
Let (H1) and (H2) hold. Suppose that Problem (M-SLQ)$^0$ is open-loop solvable. Then the family $\{\Th_\e(\cd,\cd)\}_{\e>0}$ defined by \rf{10.22.22.37} converges in $L^2(0,T';\dbR^{m\ts n})$ for any $0<T'<T$;
that is, there exists a locally square-integrable deterministic function $\Th^*:[0,T)\ts\cS\ra\dbR^{m\ts n}$ such that
$$\lim_{\e\ra0}\dbE\int_0^{T'}|\Th_\e(s,\a(s))-\Th^*(s,\a(s))|^2ds=0,\q~\forall~0<T'<T.$$
\ep

\begin{proof}
We need to show that for any $0<T'<T$, the family $\{\Th_\e(\cd)\}_{\e>0}$ is Cauchy in  $L^2(0,T';\dbR^{m\ts n})$.
To this end, let us first fix an arbitrary initial $(t,i)\in[0,T)\ts\cS$ and let
$\Phi_\e(\cd)\in L^2_{\dbF}(\Om;C([t,T];\dbR^{n\ts n}))$ be the solution to the following SDE: 
\begin{align}
 \label{eclosed-loop-state-3}\left\{\2n\ba{ll}
 \ns\ns\ds d\Phi_\e(s)=\big[A(s,\a(s))+B(s,\a(s))\Th_\e(s,\a(s))\big]\Phi_\e(s)ds\\
 \ns\ns\ds\qq\qq\ +\big[C(s,\a(s))+D(s,\a(s))\Th_\e(s,\a(s))\big]\Phi_\e(s)dW(s),\q~s\in[t,T], \\
 \ns\ns\ds \Phi_\e(t)=I_n.\ea\right.
\end{align}
Clearly, for any initial state $x$, from the uniqueness of SDEs,
the solution of \rf{eclosed-loop-state-2} is given by
$$X_\e(s)=\Phi_\e(s)x,\q~s\in[t,T].$$
Since Problem (M-SLQ)$^0$ is open-loop solvable, by Theorem \ref{10.23.16.22}, the family
$$u_\e(s)=\Th_\e(s,\a(s))X_\e(s)=\Th_\e(s,\a(s))\Phi_\e(s)x,\q~s\in[t,T],\q \e>0$$
is strongly convergent in $L^2_\dbF(t,T;\dbR^{m})$ for any $x\in\dbR^n$. It follows that $\{\Th_\e(\cd,\cd)\Phi_\e(\cd)\}_{\e>0}$ converges strongly in $L^2_\dbF(t,T;\dbR^{m\ts n})$ as $\e\ra0$.
Denote $U_\e(\cd)=\Th_\e(\cd,\cd)\Phi_\e(\cd)$ and let $U^*(\cd)$ be the strong limit of $U_\e(\cd)$.
By Jensen's inequality, we get
\bel{10.27.10.51}\int_t^T\big|\dbE[U_\e(s)]-\dbE[U^*(s)]\big|^2ds
\les\dbE\int_t^T\big|U_\e(s)-U^*(s)\big|^2ds\ra0\q\hb{as}\q\e\ra0.\ee
Moreover, from \rf{eclosed-loop-state-3}, one see that $\dbE^\a[\Phi_\e(\cd)]$ satisfies the following ODE:
$$\left\{\2n\ba{ll}
 \ds d\dbE^\a_s[\Phi_\e(s)]=\Big\{A(s,\a(s))\mathbb{E}^\a_s[\Phi_\e(s)]+B(s,\a(s))\dbE^\a_s[U_\e(s)]\Big\}ds,\q~s\in[t,T],\\
 \ns\ds \dbE^\a_t[\Phi_\e(t)]=I_n.
 \ea\right.$$
By the standard results of ODE, combining \rf{10.27.10.51}, the family of continuous functions $\dbE^\a[\Phi_\e(\cd)]$ converges uniformly to the solution of
$$\left\{\2n\ba{ll}
 \ds d\dbE^\a_s[\Phi^*(s)]=\Big\{A(s,\a(s))\mathbb{E}^\a_s[\Phi^*(s)]+B(s,\a(s))\dbE^\a_s[U^*(s)]\Big\}ds,\q~s\in[t,T],\\
 \ns\ds \dbE^\a_t[\Phi^*(t)]=I_n.
 \ea\right.$$
Thus, by noting that $\dbE^\a_t[\Phi^*(t)]=I_n$, we can choose some small constant $\D_t>0$ such that for small $\e>0$,
\begin{enumerate}[~~\,\rm(i)]
\item [(a)] $\dbE^\a_s[\Phi_\e(s)]$ is invertible for all $s\in[t,t+\D_t]$, and
\item [(b)] $|\dbE^\a_s[\Phi_\e(s)]|\ges\frac{1}{2}$ for all $s\in[t,t+\D_t]$.
\end{enumerate}
We claim that the family $\{\Th_\e(\cd,i)\}_{\e>0}$ is Cauchy in $L^2(t,t+\D_t;\dbR^{m\ts n})$ for each $i\in\cS$.
Indeed, first note that when $s\in[t,t+\D_t]$, note that (a) and (b), one has
\begin{align*}
U_\e(s)=\Th_\e(s,\a(s))\Phi_\e(s)~&\Longrightarrow~\dbE^\a_s[U_\e(s)]=\Th_\e(s,\a(s))\dbE^\a_s[\Phi_\e(s)]\\
~&\Longrightarrow~\Th_\e(s,\a(s))=\dbE^\a_s[U_\e(s)]\dbE^\a_s[\Phi_\e(s)]^{-1}.
\end{align*}
Then we have
$$\ba{ll}
\ds\dbE\int_t^{t+\D_t}\big|\Th_{\e_1}(s,\a(s))-\Th_{\e_2}(s,\a(s))\big|^2ds\\
\ns\ds=\dbE\int_t^{t+\D_t}\Big|\dbE^\a_s[U_{\e_1}(s)]\dbE^\a_s[\Phi_{\e_1}(s)]^{-1}
-\dbE[U_{\e_2}(s)]\dbE^\a_s[\Phi_{\e_2}(s)]^{-1}\Big|^2ds\\
\ns\ds\les2\dbE\int_t^{t+\D_t}\big|\dbE^\a_s[U_{\e_1}(s)-U_{\e_2}(s)]\big|^2\cd\big|
\dbE^\a_s[\Phi_{\e_1}(s)]^{-1}\big|^2ds\\
\ns\ds\q+2\dbE\int_t^{t+\D_t}\big|\dbE^\a_s[U_{\e_2}(s)]\big|^2\cd\big|\dbE^\a_s[\Phi_{\e_1}(s)]^{-1}
-\dbE^\a_s[\Phi_{\e_2}(s)]^{-1}\big|^2ds\\
\ns\ds=2\dbE\int_t^{t+\D_t}\big|\dbE^\a_s[U_{\e_1}(s)-U_{\e_2}(s)]\big|^2\cd\big|\dbE^\a_s[\Phi_{\e_1}(s)]^{-1}\big|^2ds\\
\ns\ds\q+2\dbE\int_t^{t+\D_t}\big|\dbE^\a_s[U_{\e_2}(s)]\big|^2\cd\big|\dbE^\a_s[\Phi_{\e_1}(s)-\Phi_{\e_2}(s)]\big|^2
\cd\big|\dbE^\a_s[\Phi_{\e_1}(s)]^{-1}\big|^2\cd\big|\dbE^\a_s[\Phi_{\e_2}(s)]^{-1}\big|^2ds\\
\ns\ds\les8\int_t^{t+\D_t}\big|\dbE[U_{\e_1}(s)-U_{\e_2}(s)]\big|^2ds
+32\int_t^{t+\D_t}\big|\dbE[U_{\e_2}(s)]\big|^2ds\cd
\Big(\sup_{t\les s\les t+\D_t}\big|\dbE[\Phi_{\e_1}(s)]-\dbE[\Phi_{\e_2}(s)]\big|^2\Big).
\ea$$
Since $\{U_\e(\cd)\}_{\e>0}$ is Cauchy in $L^2_{\dbF}(t,T;\dbR^{m\ts n})$ and $\{\dbE[\Phi_\e(\cd)]\}_{\e>0}$ converges uniformly on $[t,T]$, the last two terms of the above inequality approach to zero as $\e_1,\e_2\ra0$,
which implies that  $\{\Th_\e(\cd,i)\}_{\e>0}$ is Cauchy in $L^2(t,t+\D_t;\dbR^{m\ts n})$ for each $i\in\cS$.

\ms

Next we use a compactness argument to prove that, for each $i\in\cS$, $\{\Th_\e(\cd,i)\}_{\e>0}$ is actually Cauchy in
$L^2(0,T';\dbR^{m\ts n})$ for any $0<T'<T$. Take any $T'\in(0,T)$. From the preceding argument we see that
for each $t\in[0,T']$, there exists a small $\D_t>0$ such that $\{\Th_\e(\cd,i)\}_{\e>0}$ is Cauchy in $L^2(t,t+\D_t;\dbR^{m\ts n})$. Since $[0,T']$ is compact, we can choose finitely many $t\in[0,T']$, say,
$t_1,t_2,...,t_k,$ such that  $\{\Th_\e(\cd,i)\}_{\e>0}$ is Cauchy in each $L^2(t_j,t_j+\D_{t_j};\dbR^{m\ts n})$
and $[0,T']\subseteq\bigcup_{j=1}^k[t_j,t_j+\D_{t_j}]$. It follows that
$$\ba{ll}
\ds \dbE\int_t^T\big|\Th_{\e_1}(s,\a(s))-\Th_{\e_2}(s,\a(s))\big|^2ds\\
\ns\ds\les\sum_{j=1}^k\dbE\int_t^{t_j+\D_{t_j}}\big|\Th_{\e_1}(s,\a(s))-\Th_{\e_2}(s,\a(s))\big|^2ds
\ra0\q\hb{as}\q\e_1,\e_2\ra0.\ea$$
The proof is therefore completed.
\end{proof}

The following result shows that the family $\{v_\e(\cd)\}_{\e>0}$ defined by \rf{10.22.22.38} is also locally convergent in $[0,T)$.

\bp{11.1.11.08} \sl
Let (H1) and (H2) hold. Suppose that Problem (M-SLQ) is open-loop solvable. Then the family $\{v_\e(\cd)\}_{\e>0}$ defined by \rf{10.22.22.38} converges in $L^2(0,T';\dbR^{m})$ for any $0<T'<T$; that is, there exists a locally square-integrable deterministic function $v^*(\cd):[0,T)\ra\dbR^{m}$ such that
$$\lim_{\e\ra0}\dbE\int_0^{T'}|v_\e(s)-v^*(s)|^2ds=0,\q~\forall~0<T'<T.$$
\ep

\begin{proof} \rm
Let $X_\e(s)$, $0\les s\les T$, be the solution to the closed-loop system \rf{eclosed-loop-state} with respect
to initial time $t=0$. Then, on the one hand, from the linearity of the state equation \rf{state} and Lemma \ref{Appen1}, we have
$$\dbE\left[\sup_{0\les s\les T}|X_{\e_1}(s)-X_{\e_2}(s)|^2\right]
\les K\dbE\int_0^T|u_{\e_1}(s)-u_{\e_2}(s)|^2ds.$$
On the other hand, since Problem (M-SLQ) is open-loop solvable, Theorem \ref{10.23.16.22} implies that the family
\bel{11.1.16.50}u_\e(s)=\Th_\e(s,\a(s))X_\e(s)+v_\e(s),\q~s\in[0,T];\q~\e>0\ee
is Cauchy in $L^2_{\dbF}(0,T;\dbR^m)$, i.e.,
\bel{11.1.16.51}\dbE\int_0^T|u_{\e_1}(s)-u_{\e_2}(s)|^2ds\ra0\q\hb{as}\q\e_1,\e_2\ra0.\ee
Therefore
\bel{11.1.16.28}\dbE\left[\sup_{0\les s\les T}|X_{\e_1}(s)-X_{\e_2}(s)|^2\right]\ra0\q\hb{as}\q\e_1,\e_2\ra0.\ee
Now for every $0<T'<T$. Since Problem (M-SLQ) is open-loop solvable, according to
Lemma \ref{lemma-2} and Proposition \ref{10.26.17.22}, the family $\{\Th_\e(\cd,i)\}_{\e>0}$ is Cauchy in
$L^2(0,T';\dbR^{m\ts n})$ for every $i\in\cS$. Thus, combining \rf{11.1.16.28}, we have
$$\ba{ll}
\ds \dbE\int_0^{T'}\Big|\Th_{\e_1}(s,\a(s))X_{\e_1}(s)-\Th_{\e_2}(s,\a(s))X_{\e_2}(s)\Big|^2ds\\
%
%
\ns\ds\les2\dbE\int_0^{T'}|\Th_{\e_1}(s,\a(s))-\Th_{\e_2}(s,\a(s))|^2ds\cd\dbE\left[\sup_{0\les s\les T'}|X_{\e_1}(s)|^2\right]\\
\ns\ds\q+2\dbE\int_0^{T'}|\Th_{\e_2}(s,\a(s))|^2ds\cd\dbE\left[\sup_{0\les s\les T'}|X_{\e_1}(s)-X_{\e_2}(s)|^2\right]\\
\ns\ds\longrightarrow0\q\hb{as}\q\e_1,\e_2\ra0,
\ea$$
which combing \rf{11.1.16.50} and \rf{11.1.16.51}, implies that
$$\ba{ll}
\ds \dbE\int_0^{T'}|v_{\e_1}(s)-v_{\e_2}(s)|^2ds\\
\ns\ds=\dbE\int_0^{T'}\Big|[u_{\e_1}(s)-\Th_{\e_1}(s,\a(s))X_{\e_1}(s)]-[u_{\e_2}(s)
-\Th_{\e_2}(s,\a(s))X_{\e_2}(s)]\Big|^2ds\\
\ns\ds\les2\dbE\int_0^{T'}|u_{\e_1}(s)-u_{\e_2}(s)|^2ds
+2\dbE\int_0^{T'}|\Th_{\e_1}(s,\a(s))X_{\e_1}(s)-\Th_{\e_2}(s)X_{\e_2}(s,\a(s))|^2ds\\
\ns\ns\ds\longrightarrow0\q\hb{as}\q\e_1,\e_2\ra0.
\ea$$
This shows that the family $\{v_\e(\cd)\}_{\e>0}$ converges in $L^2_{\dbF}(0,T';\dbR^m)$.
\end{proof}

We are now ready to state and prove the main result of this section, which establishes
the equivalence between open-loop and weak closed-loop solvability of Problem (M-SLQ).

\bt{main-result} \sl
Let (H1) and (H2) hold. If Problem (M-SLQ) is open-loop solvable, then the
limit pair $(\Th^*(\cd,\cd),v^*(\cd))$ obtained in Propositions \ref{10.26.17.22} and \ref{11.1.11.08} is a weak closed-loop optimal strategy of Problem (M-SLQ) on any $[t,T)$. Consequently, the open-loop and weak closed-loop
solvability of Problem (M-SLQ) are equivalent.
\et

\begin{proof}
From Definition \ref{def-weak-closed}, it is obvious that the weak closed-loop solvability of Problem (M-SLQ) implies the open-loop  solvability of Problem (M-SLQ). In the following, we consider the inverse case.

\ms

Take an arbitrary initial pair $(t,x,i)\in[0,T)\ts\dbR^n\ts\cS$ and let $\{u_\e(s);t\les s\les T\}_{\e>0}$ be the family defined by \rf{10.22.22.26}. Since Problem (M-SLQ) is open-loop solvable at $(t,x,i)$, by
Theorem \ref{10.23.16.22}, $\{u_\e(s);t\les s\les T\}_{\e>0}$ converges strongly to an open-loop optimal control
$\{u^*(s);t\les s\les T\}_{\e>0}$ of Problem (M-SLQ) (for the initial pair $(t,x,i)$). Let
$\{X^*(s);t\les s\les T\}_{\e>0}$ be the corresponding optimal state process; i.e., $X^*(\cdot)$ is the adapted solution of the following equation:
$$\left\{\2n\ba{ll}
 \ns\ns\ds dX^*(s)=\Big[A(s,\a(s))X^*(s)+B(s,\a(s))u^*(s)+b(s)\Big]ds\\
 \ns\ns\ds\qq\qq+\Big[C(s,\a(s))X^*(s)+D(s,\a(s))u^*(s)+\si(s)\Big]dW(s),\q~s\in[t,T], \\
 \ns\ns\ds X^*(t)=x.\ea\right.$$
If we can show that
\bel{11.1.21.43}u^*(s)=\Th^*(s,\a(s))X^*(s)+v^*(s),\q~t\les s<T,\ee
then $(\Th^*(\cd,\cd),v^*(\cd))$ is clearly a weak closed-loop optimal strategy of Problem (M-SLQ) on $[t,T)$. To justify the argument, we note first that by Lemma \ref{Appen1}, we obtain
$$\dbE\left[\sup_{t\les s\les T}|X_\e(s)-X^*(s)|^2\right]\les
K\dbE\int_t^T|u_\e(s)-u^*(s)|^2ds\ra0\q\hb{as}\q\e\ra0,$$
where $\{X_\e(s);t\les s\les T\}_{\e>0}$ is the solution of equation \rf{eclosed-loop-state}.
Second, by Propositions \ref{10.26.17.22} and \ref{11.1.11.08}, one has
$$\left\{\ba{ll}
\ds \lim_{\e\ra0}\dbE\int_0^{T'}|\Th_\e(s,\a(s))-\Th^*(s,\a(s))|^2ds=0,\q~\forall0<T'<T,\\
\ns\ds \lim_{\e\ra0}\dbE\int_0^{T'}|v_\e(s)-v^*(s)|^2ds=0,\q~\forall0<T'<T.
\ea\right.$$
It follows that for any $0<T'<T$,
$$\ba{ll}
\ds \dbE\int_0^{T'}\Big|\big[\Th_\e(s,\a(s))X_\e(s)+v_\e(s)\big]-\big[\Th^*(s,\a(s))X^*(s)+v^*(s)\big]\Big|^2ds\\
\ns\ds\les 2\dbE\int_0^{T'}|\Th_\e(s,\a(s))X_\e(s)-\Th^*(s,\a(s))X^*(s)|^2ds+2\dbE\int_0^{T'}|v_\e(s)-v^*(s)|^2ds\\
%
%
%
\ns\ds\les4\dbE\int_0^{T'}|\Th_{\e}(s,\a(s))|^2ds\cd\dbE\left[\sup_{0\les s\les T'}|X_{\e}(s)-X^*(s)|^2\right]
+2\dbE\int_0^{T'}|v_\e(s)-v^*(s)|^2ds\\
\ns\ds\q+4\dbE\int_0^{T'}|\Th_{\e}(s,\a(s))-\Th^*(s,\a(s))|^2ds\cd\dbE\left[\sup_{0\les s\les T'}|X^*(s)|^2\right]\\
\ns\ns\ds \longrightarrow0\q\hb{as}\q\e\ra0.
\ea$$
Recall that $u_\e(s)=\Th_\e(s,\a(s))X_\e(s)+v_\e(s)$ converges strongly to $u_\e^*(s)$, $t\les s\les T$,
in $L^2_\dbF(t,T;\dbR^m)$ as $\e\ra0$. Thus, \rf{11.1.21.43} must hold. The above argument shows that the
open-loop solvability implies the weak closed-loop solvability. Consequently, the open-loop and weak closed-loop
solvability of Problem (M-SLQ) are equivalent. This completes the proof.
\end{proof}

\section{Examples}

There are some (M-SLQ) problems that are open-loop solvable, but not closed-loop solvable; for such problems, one could not expect to get a regular solution (which does not exist) to the associated GRE \rf{Riccati}, so that the state feedback representation of the open-loop optimal control might be impossible. In fact, Example \ref{11.2.21.47} has illustrated this conclusion. However, Theorem \ref{main-result} shows that the open-loop and weak closed-loop solvability of Problem (M-SLQ) are equivalent. In the following, we present another example to illustrate the procedure for finding weak closed-loop optimal strategies for some (M-SLQ) problems that are open-loop solvable (and hence weakly closed-loop solvable) but not closed-loop solvable.

\begin{example}\label{11.2.21.59} \rm
In order to present the procedure more clearly, we simplify the problem. Let $T=1$ and $D=2$, that is, the state space of $\a(\cd)$ is $\cS=\{1,2\}$.
For the generator $\l(s)\deq [\l_{ij}(s)]_{i, j = 1, 2}$, note that $\sum^{2}_{j = 1} \l_{ij}(s) = 0$ for $i\in\cS$, then
$$\l(s)=\begin{pmatrix}\l_{11}(s)&\l_{12}(s)\\\l_{21}(s)&\l_{22}(s)\end{pmatrix}
=\begin{pmatrix}\l_{11}(s)&-\l_{11}(s)\\-\l_{22}(s)&\l_{22}(s)\end{pmatrix},\q~s\in[0,1].$$
Consider the following Problem (M-SLQ) with one-dimensional state equation
\bel{11.5.21.27}\left\{\ba{ll}
\ns\ds dX(s)=\Big[-\a(s)X(s)+u(s)+b(s)\Big]ds+\sqrt{2\a(s)}X(s)dW(s),\q~s\in[t,1],\\
\ns\ns\ds  X(t)=x,\q~\a(t)=i,
\ea\right.\ee
and the cost functional
$$J(t,x,i;u(\cd))=\dbE|X(1)|^2,$$
where the nonhomogeneous term $b(\cd,\cd)$ is given by
$$b(s)=\left\{\ba{ll}
\ds \frac{1}{\sqrt{1-s}}\cd\exp\left\{\int_0^s\sqrt{2\a(r)}dW(r)-2\int_0^s\a(r)dr\right\},\qq \mbox{if }s\in[0,1);\\
\ns\ns\ns\ds 0,\qq \mbox{if } s=1.
\ea\right.$$
It is easy to see that $b(\cd,i)\in L^2_\dbF(\Om;L^1(0,1;\dbR))$ for each $i\in\cS$. In fact,
$$\ba{ll}
\ds\qq\ \ \dbE\left(\int_0^1|b(s)|ds\right)^2
=\dbE\left(\int_0^1\frac{1}{\sqrt{1-s}}\cd\exp\left\{\int_0^s\sqrt{2\a(r)}dW(r)-2\int_0^s\a(r)dr\right\}ds\right)^2\\
\ns\ns\ds\qq\qq\qq\qq\qq\ \les
\dbE\left(\int_0^1\frac{1}{\sqrt{1-s}}\cd\exp\left\{\int_0^s\sqrt{2\a(r)}dW(r)-\int_0^s\a(r)dr\right\}ds\right)^2\\
\ns\ns\ds\qq\qq\qq\qq\qq\ \les
\dbE\left(\int_0^1\frac{1}{\sqrt{1-s}}ds\cd
\sup_{0\les s\les 1}\exp\left\{\int_0^s\sqrt{2\a(r)}dW(r)-\int_0^s\a(r)dr\right\}\right)^2\\
\ns\ns\ds\qq\qq\qq\qq\qq\ =\left(\int_0^1\frac{1}{\sqrt{1-s}}ds\right)^2\cd
\dbE\left(\sup_{0\les s\les 1}\exp\left\{\int_0^s\sqrt{2\a(r)}dW(r)-\int_0^s\a(r)dr\right\}\right)^2\\
\ns\ns\ds\qq\qq\qq\qq\qq\ =4\
\dbE\left(\sup_{0\les s\les 1}\exp\left\{\int_0^s\sqrt{2\a(r)}dW(r)-\int_0^s\a(r)dr\right\}\right)^2.
\ea$$
Since the term $\exp\left\{\int_0^s\sqrt{2\a(r)}dW(r)-\int_0^s\a(r)dr\right\}$ is a square-integrable martingale, note that $\a(\cd)$ belongs to $\cS=\{1,2\}$, it follows from Doob's maximal inequality that
$$\ba{ll}
\ds \dbE\left(\sup_{0\les s\les 1}\exp\left\{\int_0^s\sqrt{2\a(r)}dW(r)-\int_0^s\a(r)dr\right\}\right)^2
\les 4\dbE\exp\left\{2\int_0^1\sqrt{2\a(r)}dW(r)-2\int_0^1\a(r)dr\right\}\\
\ns\ds\qq\qq\qq\qq\qq\qq\qq\qq\qq\qq\qq\ 
\les4e^4.
\ea$$
Thus,
$$ \dbE\left(\int_0^1|b(s)|ds\right)^2\les 16e^4,$$
which implies that $b(\cd,i)\in L^2_\dbF(\Om;L^1(0,1;\dbR))$ for each $i\in\cS$.

\ms

\ms

We first claim that this (M-SLQ) problem is not closed-loop solvable on any $[t,1]$. Indeed, the generalized Riccati equation associate with this problem reads
$$\left\{\ba{ll}
\ds \dot P(s,1)+\l_{11}(s)P(s,1)-\l_{11}(s)P(s,2)=0,\q~\ae~s\in[t,1],\q~\\
\ns\ds P(1,1)=1,
\ea\right. \mbox{for } i=1,
$$
and
$$\left\{\ba{ll}
\ds \dot P(s,2)-\l_{22}(s)P(s,1)+\l_{22}(s)P(s,2)=0,\q \ae~s\in[t,1],\q~\\
\ns\ds P(1,2)=1,
\ea\right. \mbox{for } i=2,
$$
whose solutions are $P(s,1)=P(s,2)=1$, or $P(s,i)\equiv1,$ for $(s,i)\in[0,1]\ts\cS.$
Then for any $s\in[t,1]$ and $i\in\cS$, we have
$$\ba{ll}
\ds \cR\big(\hat{S}(s,i)\big)=\cR(1)=\dbR,\\
\ns\ds \cR\big(\hat{R}(s,i)\big)=\cR(0)=\{0\},\q~
\ea\Longrightarrow\q~\cR\big(\hat{S}(s,i)\big)\nsubseteq\cR\big(\hat{R}(s,i)\big).$$
where
\begin{equation}\label{eq:hatsr1}
  \begin{aligned}
    \hat S(s,i)&\deq B(s,i)^\top P(s,i)+ D(s,i)^\top P(s,i)C(s,i)+S(s,i), \\
    \hat R(s,i)&\deq R(s,i)+D(s,i)^\top P(s,i)D(s,i).
  \end{aligned}
\end{equation}
Therefore, the range inclusion condition is not satisfied. This implies that our claim holds.

\ms

In the following, we use Theorem \ref{10.23.16.22} to conclude that the above (M-SLQ) problem is open-loop solvable
(and hence, by Theorem \ref{main-result}, weakly closed-loop solvable). Without loss of generality, we consider only the open-loop solvability at $t=0$. To this end, let $\e>0$ be arbitrary and consider Riccati equations \rf{Riccati}, which, in our example, read:
$$\left\{\ba{ll}
\ds \dot P_\e(s,1)-\frac{1}{\e}P_\e(s,1)^2+\l_{11}(s)P_\e(s,1)-\l_{11}(s)P_\e(s,2)=0,\q~\ae~s\in[t,1],\q~\\
\ns\ds P_\e(1,1)=1,
\ea\right. \mbox{for } i=1,$$
and
$$\left\{\ba{ll}
\ds \dot P_\e(s,2)-\frac{1}{\e}P_\e(s,2)^2-\l_{22}(s)P_\e(s,1)+\l_{22}(s)P_\e(s,2)=0,\q \ae~s\in[t,1],\q~\\
\ns\ds P_\e(1,2)=1,
\ea\right. \mbox{for } i=2.$$
Solving the above equations yields
$$P_\e(s,1)=P_\e(s,2)=\frac{\e}{\e+1-s},\q~s\in[0,1].$$
Or
$$P_\e(s,i)=\frac{\e}{\e+1-s},\q~(s,i)\in[0,1]\ts\cS.$$
Noting that the state space of $\a(s)$ is $\cS=\{1,2\}$, we let
\bel{Theta}\ba{ll}
\ds \Th_\e(s,\a(s))\deq-[\hat R_\e(s,\a(s))+\e I_m]^{-1}\hat S_\e(s,\a(s))\\
\ns\ds\qq\qq\q=-\frac{P_\e(s,\a(s))}{\e}=-\frac{1}{\e+1-s},\q~s\in[0,1].
\ea\ee
Then, the corresponding BSDE \rf{eta-zeta-xi} reads
$$\left\{\2n\ba{ll}
\ds d\eta_\e(s)=-\Big\{\big[\Th_\e(s,\a(s))-\a(s)\big]\eta_\e(s)+\sqrt{2\a(s)}\z_\e(s)+P_\e(s,\a(s))b(s)\Big\}ds\\
\ns\ns\ds \qq\qq+\z_\e(s) dW(s)+\sum_{k=1}^2\xi^\e_k(s)d\wt{N}_k(s),\q~s\in[0,1],\\
\ns\ns\ds \eta_\e(1)=0.\ea\right.$$
Let $f(s)=\frac{1}{\sqrt{1-s}}$. Using the variation of constants formula for BSDEs, and noting that $W(\cd)$ and $\wt{N}_k(\cd)$ are $(\dbF, \dbP)$-martingales, we obtain
$$\ba{ll}
\ds \eta_\e(s)=\frac{\e}{\e+1-s}\cd\exp\left\{2\int_0^s\a(r)dr-\int_0^s\sqrt{2\a(r)}dW(r)\right\}\\
\ns\ds\qq\q\ \cd\dbE\left[\int_s^1b(r)
\cd\exp\left\{\int_0^r\sqrt{2\a(\bar{r})}dW(\bar{r})-2\int_0^r\a(\bar{r})d\bar{r}\right\}dr\bigg|\cF_s\right]\\
\ns\ds\qq\ =\frac{\e}{\e+1-s}\cd\exp\left\{2\int_0^s\a(r)dr-\int_0^s\sqrt{2\a(r)}dW(r)\right\}\\
\ns\ds\qq\q\ \cd\int_s^1f(r)
\cd\dbE\left[\exp\left\{2\int_0^r\sqrt{2\a(\bar{r})}dW(\bar{r})
-4\int_0^r\a(\bar{r})d\bar{r}\right\}\bigg|\cF_s\right]dr\\
\ns\ds\qq\ =\frac{\e}{\e+1-s}\cd\exp\left\{\int_0^s\sqrt{2\a(r)}dW(r)-2\int_0^s\a(r)dr\right\}
\cd\int_s^1f(r)dr,\q~s\in[0,1].
\ea$$
It should be point out that, in the above equality, we use the Fibini's Theorem and the martingale property, i.e.,
$$\ba{ll}
\ds \dbE\left[\exp\left\{2\int_0^r\sqrt{2\a(\bar{r})}dW(\bar{r})
-4\int_0^r\a(\bar{r})d\bar{r}\right\}\bigg|\cF_s\right]\\
\ns\ds =\exp\left\{2\int_0^s\sqrt{2\a(\bar{r})}dW(\bar{r})
-4\int_0^s\a(\bar{r})d\bar{r}\right\},\q~0\les s\les r\les1.
\ea$$
Now, let
\bel{Eta}\ba{ll}
\ds v_\e(s)\deq-[\hat R_\e(s,\a(s))+\e I_m]^{-1}\hat\rho_\e(s,\a(s))=-\frac{\eta_\e(s)}{\e}\\
\ns\ds\qq\ =-\frac{1}{\e+1-s}\cd\exp\left\{\int_0^s\sqrt{2\a(r)}dW(r)-2\int_0^s\a(r)dr\right\}
\cd\int_s^1f(r)dr,\q~s\in[0,1].
\ea\ee
Then, the corresponding closed-loop system \rf{eclosed-loop-state} can be written as
$$\left\{\2n\ba{ll}
 \ds dX_\e(s)=\Big\{\big[\Th_\e(s,\a(s))-\a(s)\big]X_\e(s)+v_\e(s)+b(s)\Big\}ds
 +\sqrt{2\a(s)}X_\e(s)dW(s),\q s\in[0,1], \\
 \ns\ns\ds X_\e(0)=x,\ea\right.$$
By the variation of constants formula for SDEs, we get
$$\ba{ll}
\ds X_\e(s)=(\e+1-s)\cd\exp\left\{\int_0^s\sqrt{2\a(r)}dW(r)-2\int_0^s\a(r)dr\right\}\\
\ns\ds\qq\qq \cd\int_0^s\left[\frac{1}{\e+1-r}\cd
\exp\left\{-\int_0^r\sqrt{2\a(\bar{r})}dW(\bar{r})+2\int_0^r\a(\bar{r})d\bar{r}\right\}
\cd\big(v_\e(r)+b(r,\a(r))\big)\right]dr\\
\ns\ds\qq\q~ +x\cd\frac{\e+1-s}{\e+1}\cd\exp\left\{\int_0^s\sqrt{2\a(r)}dW(r)-2\int_0^s\a(r)dr\right\},\q~s\in[0,1].
\ea$$
In light of Theorem \ref{10.23.16.22}, in order to prove the open-loop solvability at $(0,x,i)$, it suffices to show the family $\{u_\e(\cd)\}_{\e>0}$ defined by
\bel{11.4.21.34}\ba{ll}
\ds u_\e(s)\deq\Th_\e(s,\a(s))X_\e(s)+v_\e(s)\\
\ns\ds\qq~ =-\exp\left\{\int_0^s\sqrt{2\a(r)}dW(r)-2\int_0^s\a(r)dr\right\}\\
\ns\ds\qq\qq~ \cd\int_0^s\left[\frac{1}{\e+1-r}\cd
\exp\left\{-\int_0^r\sqrt{2\a(\bar{r})}dW(\bar{r})+2\int_0^r\a(\bar{r})d\bar{r}\right\}
\cd\big(v_\e(r)+b(r,\a(r))\big)\right]dr\\
\ns\ds\qq\q~ -\frac{x}{\e+1}\cd\exp\left\{\int_0^s\sqrt{2\a(r)}dW(r)-2\int_0^s\a(r)dr\right\}
+v_\e(s),\q~s\in[0,1],
\ea\ee
is bounded in $L^2_{\dbF}(0,1;\dbR)$. For this, let us first simplify \rf{11.4.21.34}. On the one hand, by Fubini's theorem,
$$\ba{ll}
\ds \int_0^s\left[\frac{1}{\e+1-r}\cd
\exp\left\{-\int_0^r\sqrt{2\a(\bar{r})}dW(\bar{r})+2\int_0^r\a(\bar{r})d\bar{r}\right\}
\cd v_\e(r)\right]dr\\
\ns\ds =-\int_0^s\frac{1}{(\e+1-r)^2}\int_r^1f(\bar{r})d\bar{r}dr\\
\ns\ds =-\int_0^sf(\bar r)\int_0^{\bar r}\frac{1}{(\e+1-r)^2}drd\bar{r}
-\int_s^1f(\bar r)\int_0^s\frac{1}{(\e+1-r)^2}drd\bar r\\
\ns\ds =-\int_0^s\frac{1}{\e+1-r}\cd f(\bar r)d\bar{r}+\frac{1}{\e+1}\int_0^1f(\bar r)d\bar r
-\frac{1}{\e+1-r}\int_s^1f(\bar r)d\bar r.
\ea$$
Similarly, on the other hand,
$$\ba{ll}
\ds \int_0^s\left[\frac{1}{\e+1-r}\cd
\exp\left\{-\int_0^r\sqrt{2\a(\bar{r})}dW(\bar{r})+2\int_0^r\a(\bar{r})d\bar{r}\right\}
\cd b_\e(r,\a(r))\right]dr=\int_0^s\frac{1}{\e+1-r}f(r)dr.
\ea$$
Consequently, we get
\bel{11.5.20.56}\ba{ll}
\ds u_\e(s)=-\left(\frac{x}{\e+1}+\frac{1}{\e+1}\int_0^1f(r)dr\right)
\cd\exp\left\{\int_0^s\sqrt{2\a(r)}dW(r)-2\int_0^s\a(r)dr\right\}\\
\ns\ds\qq\ =-\frac{x+2}{\e+1}\cd\exp\left\{\int_0^s\sqrt{2\a(r)}dW(r)-2\int_0^s\a(r)dr\right\}.
\ea\ee
A short calculation gives
$$\dbE\int_0^1|u_\e(s)|^2ds=\left(\frac{x+2}{\e+1}\right)^2\les (x+2)^2,\q~\forall \e>0.$$
Therefore, $\{u_\e(\cd)\}_{\e>0}$ is bounded  in $L^2_{\dbF}(0,1;\dbR)$. Now, let $\e\ra0$ in \rf{11.5.20.56}, we get an open-loop optimal control:
$$u^*(s)=-(x+2)\cd\exp\left\{\int_0^s\sqrt{2\a(r)}dW(r)-2\int_0^s\a(r)dr\right\},\q~s\in[0,1].$$
From the above discussion, similar to the state process $X(\cd)$ of \rf{11.5.21.27}, the open-loop optimal control $u^*(\cd)$ also depends on the regime switching term $\a(\cd)$. That is to say, as the value of the switching $\a(\cd)$ varies, the open-loop optimal control $u^*(\cd)$ will be changed too.

\ms

Finally, we let $\e\ra0$ in \rf{Theta} and \rf{Eta} to get a weak closed-loop optimal strategy
$(\Th^*(\cd,\cd),v^*(\cd))$:
$$\ba{ll}
\ds \Th^*(s,\a(s))=\lim_{\e\ra0}\Th_\e(s,\a(s))=-\frac{1}{1-s},\qq s\in[0,1),\\
\ns\ds v^*(s)=\lim_{\e\ra0}v_\e(s)=
-\frac{1}{1-s}\cd\exp\left\{\int_0^s\sqrt{2\a(r)}dW(r)-2\int_0^s\a(r)dr\right\}
\cd\int_s^1f(r)dr\\
\ns\ds\qq\qq\qq\q~=-\frac{2}{\sqrt{1-s}}\cd\exp\left\{\int_0^s\sqrt{2\a(r)}dW(r)-2\int_0^s\a(r)dr\right\},
\qq s\in[0,1).
\ea$$
We put out that neither $\Th^*(\cd,\cd)$ and $v^*(\cd)$ is square-integrable on $[0,1)$. Indeed, one has
$$\ba{ll}
\ds \dbE\int_0^1|\Th^*(s,\a(s))|^2ds=\int_0^1\frac{1}{(1-s)^2}ds=\i,\\
\ns\ns\ds \dbE\int_0^1|v^*(s)|^2ds
=\dbE\int_0^1\frac{4}{1-s}\cd\exp\left\{2\int_0^s\sqrt{2\a(r)}dW(r)-4\int_0^s\a(r)dr\right\}ds\\
\ns\ns\ds\qq\qq\qq\q=\dbE\int_0^1\frac{4}{1-s}ds=\i.
\ea$$
\end{example}

\section{Conclusions}

In this paper, we mainly study the open-loop and weak closed-loop solvabilities for a class of stochastic LQ optimal control problems of Markovian regime switching system. The main result is that these two solvabilities are equivalent.
First, using the perturbation approach, we provide an alternative characterization of the open-loop solvability. Then we investigate the weak closed-loop solvability of the LQ problem of Markovian regime switching system, and establish the equivalent relationship between open-loop and weak closed-loop solvabilities. Finally, we present an example to illustrate the procedure for finding weak closed-loop optimal strategies in the circumstance of Markovian regime switching system.

\section*{Competing interests}

Conflict of Interest: The authors declare that they have no conflict of interest.

\section*{Acknowledgements}

The authors would like to thank the editors and the anonymous referees for many helpful comments
and valuable suggestions on this paper.


\begin{thebibliography}{90}
\addtolength{\itemsep}{-1.5ex}

\bibitem{Ait Rami-Moore-Zhou 2001}
Ait Rami, M., Moore, J.B., Zhou. X.Y.:
Indefinite stochastic linear quadratic control and generalized differential Riccati equation.
SIAM J. Control Optim. {\bf 40}, 1296--1311 (2001).

\bibitem{Bensoussan 1982}
Bensoussan, A.:
Lectures on stochstic control, part I, in nonlinear filtering and stochastic control.
Lecture Notes in Mathematics, vol. 972. Springer, Berlin (1982).

\bibitem{Chen-Li-Zhou 1998}
Chen, S.P., Li, X.J., Zhou, X.Y.:
Stochastic linear quadratic regulators with indefinite control weight costs.
SIAM J. Control Optim. {\bf 36}, 1685--1702 (1998).

\bibitem{Chen-Yong 2001}
Chen, S.P., Yong, J.M.:
Stochastic linear quadratic optimal control problems.
Appl. Math. Optim. {\bf 43}, 21--45 (2001).

\bibitem{DH12} Donnelly, C., Heunis, A.J.: Quadratic risk minimization in a regime-switching model with
portfolio constraints. SIAM J. Control Optim. {\bf 50}, 2431--2461 (2012).

\bibitem{Davis 1977}
Davis, M.H.A.:
Linear estimation and stochastic control.
Chapman and Hall, London (1977).

\bibitem{Hu-Liang-Tang 2018}
Hu, Y., Liang, G.C., Tang, S.J.:
Systems of infinite horizon and ergodic BSDE arising in regime switching forward performance processes.
arxiv.org/abs/1807.01816.

\bibitem{Hu-Oksendal 2008}
Hu, Y.Z., Oksendal, B.:
Partial information linear quadratic control for jump diffusions.
SIAM J. Control Optim. {\bf 47}, 1744--1761 (2008).

\bibitem{Ji-Chizeck 1990}
Ji, Y., Chizeck, H.J.:
Controllability, stabilizability, and continuous-time Markovian jump linear quadratic control.
IEEE Trans. Automat. Control {\bf 35}, 777--788 (1990).

\bibitem{Ji-Chizeck 1992}
Ji, Y., Chizeck, H.J.:
Jump linear quadratic Gaussian control in continuous time.
IEEE Trans. Automat. Control {\bf 37}, 1884--1892 (1992).

\bibitem{Kalman 1960}
Kalman., R.E.:
Contributions to the theory of optimal control.
Bol. Soc., Mat. Mex. {\bf 5}, 102--119 (1960).

\bibitem{Li-Wu-Yu 2018}
Li, N., Wu, Z., Yu, Z.Y.:
Indefinite stochastic linear-quadratic optimal control problems with random jumps and related stochastic Riccati equations.
Science China-Mathematics {\bf 61}, 563--576 (2018).

\bibitem{LZ15}  Li, Y.S., Zheng, H.: Weak necessary and suficient stochastic maximum principle for Markovian
regime-switching diffusion models. Appl. Math. Optim. {\bf 71}, 39--77 (2015).


\bibitem{Li-Zhou-Ait Rami 2003}
Li, X., Zhou, X.Y., Ait Rami, M.:
Indefinite stochastic linear quadratic control with Markovian jumps in infinite time horizon.
J. Global Optim. {\bf 27}, 149--175 (2003).


\bibitem{Li-Zhou-Lim 2002}
Li, X., Zhou, X.Y.,  Lim, A.E.B.:
Dynamic mean-variance portfolio selection with no-shorting constraints,
SIAM J. Control Optim. {\bf 40}, 1540--1555 (2002).


\bibitem{Liu-Yin-Zhou 2005}
Liu, Y.J., Yin, G., Zhou, X.Y.:
Near-optimal controls of random-switching LQ problems with indefinite control weight costs.
Automatica {\bf 41}, 1063--1070 (2005).

\bibitem{Mei-Yong 2018}
Mei, H.W., Yong, J.M.:
Equilibrium strategies for time-inconsistent stochastic switching systems.
ESAIM: COCV. To appear (2019).

\bibitem{Sun-Li-Yong 2016}
Sun, J.R., Li, X., Yong, J.:
Open-loop and closed-loop solvabilities for stochastic linear quadratic optimal control problems.
SIAM J. Control Optim. {\bf 54}, 2274--2308 (2016).

\bibitem{Sun-Wang 2019}
Sun, J.R., Wang, H.X.:
Mean-Field Stochastic Linear-Quadratic Optimal Control Problems: Weak Closed-Loop Solvability.
Arxiv.org/abs/1907.01740.

\bibitem{Sun-Yong 2014}
Sun, J.R., Yong, J.M.:
Linear quadratic stochastic differential games: open-loop and closed-loop saddle points.
SIAM J. Control Optim. {\bf 52}, 4082--4121 (2014).


\bibitem{Wang-Sun-Yong 2018}
Wang, H.X., Sun, J.R., Yong, J.M.:
Weak Closed-Loop Solvability of Stochastic Linear-Quadratic Optimal Control Problems.
Disc. Conti. Dyn. Sys.-A {\bf 39}: 2785--2805 (2019).

\bibitem{Wonham 1968}
Wonham, W.M.:
On a matrix Riccati equation of stochastic control.
SIAM J. Control Optim. {\bf 6}, 681--697 (1968).

\bibitem{Wu-Wang 2003}
Wu., Z., Wang, X.R.:
FBSDE with Poisson process and its application to linear quadratic stochastic optimal control problem with random jumps.
Acta Automatica Sinica {\bf29}, 821--826 (2003).

\bibitem{Yin-Zhou 2004}
Yin, G., Zhou, X.Y.:
Markowitz's mean-variance portfolio selection with regime switching:
From discrete-time models to their continuous-time limits.
IEEE Trans. Automat. Control {\bf 49}, 349--360 (2004).

\bibitem{Yong-Zhou 1999}
Yong, J.M., Zhou, X.Y.:
Stochastic Controls: Hamiltonian Systems and HJB Equations.
Springer-Verlag, New York (1999).

\bibitem{Yu 2017}
Yu, Z.Y.:
Infinite horizon jump-diffusion forward-backward stochastic differential equations
and their application to backward linear-quadratic problems.
ESAIM: COCV. {\bf 23}, 1331--1359 (2017).

\bibitem{Zhang-Yin 1999}
Zhang, Q., Yin, G.:
On nearly optimal controls of hybrid LQG problems.
IEEE Trans. Automat. Control {\bf 44}, 2271--2282 (1999).

\bibitem{Zhang-Elliott-Siu 2012}
Zhang, X., Elliott, R.J., Siu, T.K.:
A stochastic maximum principle for a Markov regime-switching jump-diffusion model and its application to finance.
SIAM J. Control Optim. {\bf 50}, 964--990 (2012).

\bibitem{Zhang-Li-Xiong 2018}
Zhang, X., Li, X., Xiong, J.:
Open-loop and closed-loop solvabilities for stochastic linear quadratic optimal control problems
of Markov Regime-Switching System.
arxiv.org/abs/1809.01891.

\bibitem{Zhang-Siu-Meng 2010}
Zhang, X., Siu, T.K., Meng, Q.B.:
Portfolio selection in the enlarged Markovian regime-switching market.
SIAM J. Control Optim. {\bf 48}, 3368--3388 (2010).

\bibitem{Zhang-Sun-Xiong 2018}
Zhang, X., Sun, Z.Y., Xiong, J.:
A general stochastic maximum principle for a Markov regime switching jump-diffusion model of mean-field type.
SIAM J. Control Optim. {\bf 56}, 2563--2592 (2018).

\bibitem{Zhou-Li 2000}
Zhou, X.Y., Li, D.:
Continuous-time mean-variance portfolio selection: A stochastic LQ framework,
Appl. Math. Optim. {\bf 42}, 19--33 (2000).

\bibitem{Zhou-Yin 2003}
Zhou, X.Y., Yin, G.:
Markowitz's mean-variance portfolio selection with regime switching: A continuous-time model.
SIAM J. Control Optim. {\bf 42}, 1466--1482 (2003).

\end{thebibliography}
\end{document}